\def\figscale{1.0}
\newenvironment{proofclaim}[1][]%
    {\noindent \emph{Proof.} {}{#1}{}}{$~$\hfill $~\blacklozenge$ \vspace{0.2cm}}
\newtheorem{theorem}{Theorem}[section]
\newtheorem{lemma}[theorem]{Lemma}
\newtheorem{corollary}[theorem]{Corollary}
\newtheorem{problem}[theorem]{Problem}
\newtheorem{claim}{Claim}
\newcommand{\set}[1]{\ensuremath{\left\{#1 \right\}}}
\newcommand{\inter}{\ensuremath{\mathrm{int}}}
\newcommand{\exter}{\ensuremath{\mathrm{ext}}}
\newcommand{\Pl}{\ensuremath{\mathrm{Pl}_{4,4f}}}
\begin{document}

\title{\textbf{Further Extensions of the Gr\"{o}tzsch Theorem}}

\author{	
	Hoang La\thanks{
		LIRMM, Universit\'{e} de Montpellier, CNRS, Montpellier, France
		E-Mail: \texttt{xuan-hoang.la@lirmm.fr}}, \
	Borut Lu\v{z}ar\thanks{Faculty of Information Studies in Novo mesto, Slovenia.\newline
		E-Mails: \texttt{borut.luzar@gmail.com, kennystorgel.research@gmail.com}}, \
	Kenny \v{S}torgel\footnotemark[2],\
}

\maketitle

{%
\abstract{
	The Gr\"{o}tzsch Theorem states that every triangle-free planar graph admits a proper $3$-coloring.
	Among many of its generalizations, the one of Gr\"{u}nbaum and Aksenov, giving $3$-colorability of
	planar graphs with at most three triangles, is perhaps the most known. 
	A lot of attention was also given to extending $3$-colorings of subgraphs to the whole graph.
	In this paper, 
	we consider $3$-colorings of planar graphs with at most one triangle.
	Particularly, we show that precoloring of any two non-adjacent vertices and precoloring of a face of length at most $4$
	can be extended to a $3$-coloring of the graph. 
	Additionally, we show that for every vertex of degree at most $3$, a precoloring of its neighborhood with the same color 
	extends to a $3$-coloring of the graph.
	The latter result implies an affirmative answer to a conjecture on adynamic coloring.
	All the presented results are tight.
}

\bigskip
{\noindent\small \textbf{Keywords:} Gr\"{o}tzsch Theorem, planar graph, $3$-coloring, precoloring extension, one triangle}
}%

\section{Introduction}

A {\em proper coloring} of a graph $G$ is an assignment of colors to its vertices such that adjacent vertices are assigned distinct colors.
For an integer $k$, a graph is $k$-colorable if it admits a proper coloring with at most $k$ colors;
the smallest such $k$ is called the {\em chromatic number} of $G$, denoted by $\chi(G)$.

The Four Color Theorem~\cite{AppHak77,AppHakKoc77} states that the chromatic number of any planar graph is at most $4$,
but determining which graphs achieve the equality is an NP-complete problem~\cite{Dai80}.
Consequently, searching for properties of (planar) graphs that guarantee $3$-colorability is a very vibrant field 
(see, e.g.,~\cite{Bor13} for a survey).
It turns out that triangles play an important role in this decision problem;
indeed, a cornerstone theorem of Gr\"{o}tzsch~\cite{Gro59}
states that every triangle-free planar graph is $3$-colorable.
Consequently, the focus in the field turned to investigating ways in which triangles can appear in $3$-colorable planar graphs.
For example, for any plane triangulation, Heawood~\cite{Hea98} established a necessary and sufficient condition 
by showing that it is $3$-colorable if and only if all of its vertices have even degrees
(see~\cite{DikKowKur02,EllFleKocWen04,Koc18} for generalizations of this statement).

We may also allow triangles in general planar graphs and still retain $3$-colorability:
Havel~\cite{Hav69} conjectured that a $3$-colorable planar graph may contain arbitrarily many triangles as long as they are sufficiently far apart
and Steinberg~\cite{Ste93} conjectured that every planar graph without cycles of length $4$ and $5$ is $3$-colorable.
While Havel's conjecture has been proved by Dvo\v{r}\'{a}k, Kr\'{a}\v{l}, and Thomas~\cite{DvoKraTom16},
Steinberg's conjecture has been refuted by Cohen-Addad et al.~\cite{CohHebKraLiSal17}.
Currently the best result of a similar flavor is due to Borodin et al.~\cite{BorGleMonRas09}, 
stating that every planar graph without cycles of length $5$ and $7$, and without adjacent triangles is $3$-colorable. 
On the other hand, there are $3$-colorable planar graphs that may have close triangles (even incident) and have no short cycles forbidden:
as proved in~\cite{DroLuzMacSot19}, 
every planar graph obtained as a subgraph of the medial graph of a bipartite plane graph is $3$-colorable (in fact, $3$-choosable).

Another direction of research is focused on planar graphs with small number of triangles.
Gr\"{u}nbaum~\cite{Gru63} noticed that a planar graph may contain three triangles and still retain $3$-colorability.
His original proof was incorrect and a corrected version was published by Aksenov~\cite{Aks74}.
\begin{theorem}[\cite{Aks74}]
	\label{thm:3triangles}
	Every planar graph with at most three triangles is $3$-colorable.
\end{theorem}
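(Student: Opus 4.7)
The plan is to argue by induction on $|V(G)|$, taking $G$ to be a minimum counterexample; since Gr\"{o}tzsch's theorem handles the triangle-free case, $G$ must contain $t$ triangles with $1 \le t \le 3$. Before attacking the triangles, I would perform the standard minimality reductions to show that $G$ is $2$-connected and has minimum degree at least $3$: blocks can be colored independently across cut-vertices, a vertex of degree $1$ extends trivially, and a vertex $v$ of degree $2$ with neighbors $u,w$ (non-adjacent, with $v$ their only common neighbor) can be eliminated by identifying $u$ with $w$, yielding a smaller planar graph with no more triangles to which induction applies.

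The main reduction would exploit an \emph{edge identification} inside a triangle. I fix a triangle $T=v_1v_2v_3$, select an edge $e=v_1v_2$, and form $G'$ by contracting $e$ into a single vertex $v_{12}$ and suppressing any resulting parallel edges. Any proper $3$-coloring of $G'$ lifts to $G$ by giving both $v_1$ and $v_2$ the color of $v_{12}$ and $v_3$ a different color, which is legal because the edge $v_1v_2$ is now saturated. The contraction destroys $T$, so ideally $G'$ has strictly fewer triangles than $G$; the complications are that $v_1$ and $v_2$ may share a common neighbor $w \neq v_3$ (causing a parallel edge, equivalently a separating $4$-cycle in $G$), or a neighbor $a$ of $v_1$ may be adjacent to a neighbor $b$ of $v_2$ (creating a new triangle $v_{12}ab$ in $G'$ from a $5$-cycle in $G$ through $e$). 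The objective is to show that for some triangle and some edge of it, the identification produces a simple planar graph with at most three triangles, after which induction concludes.

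The step I expect to be hardest is the case analysis ruling out configurations where no edge of any triangle admits a safe identification. Each obstruction to a given edge contributes either an extra triangle to $G'$ or a short separating cycle in $G$, and since $t \le 3$ the total number of obstructions is tightly bounded. A careful enumeration should show that on any single triangle the three edges cannot all be simultaneously blocked, and when $t \ge 2$ the interactions between the triangles restrict matters further. The truly delicate configurations, where every edge of every triangle is blocked by cross-adjacent neighbors, appear to require an auxiliary argument: either identifying a non-adjacent pair at distance $2$ across a short even cycle, or splitting $G$ along a separating $4$- or $5$-cycle and $3$-coloring the two sides compatibly. Gluing partial $3$-colorings along short separating cycles is, in my view, where the real technical work of the proof will lie.
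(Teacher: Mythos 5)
Your central reduction is invalid. You form $G'$ by contracting the edge $e=v_1v_2$ of a triangle and then propose to lift a $3$-coloring of $G'$ back to $G$ by giving both $v_1$ and $v_2$ the color of $v_{12}$. But $v_1v_2$ is an edge of $G$, so this assignment is not a proper coloring of $G$: ``saturating'' the edge inside the contraction does not delete it from $G$, whose coloring must still be proper. Identification arguments in Gr\"{o}tzsch-type proofs only ever merge \emph{non-adjacent} vertices (opposite vertices of a $4$-face, vertices at distance two on the outer face, etc.), precisely because identification forces the two vertices to receive equal colors. Forcing equality across an edge of a triangle is self-defeating: in any proper $3$-coloring the three vertices of $T$ get three distinct colors, so no reduction that equates two of them can ever produce a coloring of $G$. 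The argument therefore fails at its first step, before the case analysis of obstructions even begins.

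Even if the reduction were repaired by switching to identifications of non-adjacent vertices, the portion you defer with ``a careful enumeration should show\ldots'' is exactly where the substance of the theorem lies; Gr\"{u}nbaum's original argument along these informal lines was erroneous, and Aksenov's correction is long. The modern short proof (due to Borodin, Kostochka, Lidick\'{y}, and Yancey) proceeds quite differently: one takes a minimal, hence $4$-critical, counterexample, combines the edge bound $m_G \ge \frac{5n_G-2}{3}$ for $4$-critical graphs (Theorem~\ref{thm:4-critical}) with Euler's formula and a face-length count to force the existence of a $4$-face, and then uses Lemma~\ref{lem:10Borodin} to identify a pair of \emph{opposite} vertices of that $4$-face without increasing the number of triangles, handling the exceptional adjacent-triangle configuration separately. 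Your sketch contains neither the critical-graph counting step nor a workable identification, so as written it does not constitute a proof.
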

Shorter proofs of this result were given by Borodin~\cite{Bor97} and Borodin et al.~\cite{BorKosLidYan14}.
The authors of the latter used the following result on $4$-critical graphs due to Kostochka and Yancey~\cite{KosYan14}.
\begin{theorem}[\cite{KosYan14}]
	\label{thm:4-critical}
	If $G$ is a $4$-critical graph on $n$ vertices, then 
	$$
		|E(G)| \ge \frac{5n-2}{3}\,.
	$$	
\end{theorem}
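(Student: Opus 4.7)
The plan is to work with the equivalent reformulation that the potential $\rho(G) := 5|V(G)| - 3|E(G)|$ satisfies $\rho(G) \le 2$ for every $4$-critical graph $G$ on at least $2$ vertices, and to proceed by induction on $|V(G)|$. Assume a counterexample $G$ of smallest order with $\rho(G) \ge 3$. By $4$-criticality, every proper subgraph of $G$ is $3$-colorable, and the inductive hypothesis $\rho(H) \le 2$ is available for every $4$-critical graph $H$ with $|V(H)| < |V(G)|$.

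A first, easy pass is to rule out local reducible configurations. In particular, $G$ has minimum degree at least $3$ (a standard consequence of $4$-criticality: a lower-degree vertex could be deleted and the $3$-coloring of $G - v$ extended), and more generally no small subgraph $R$ such that every $3$-coloring of $G - R$ extends to $G$ can exist. These reductions alone only yield an average-degree bound weaker than $10/3$; the tight constant $5/3$ requires a global extremal argument, not merely local forbidden configurations.

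The main step I would pursue is an identification/extension argument driven by an extremal choice of subgraph. Among proper subsets $R \subsetneq V(G)$ with $|R| \ge 2$, pick one minimizing $\rho(G[R])$ (or a closely related quantity). Since $G - R$ is $3$-colorable, take such a coloring $\varphi$, and construct an auxiliary graph $G'_\varphi$ from $R$ by encoding the constraints that $\varphi$ places on the coloring of $R$: typically by identifying vertices of $R$ that are forced to share a color (because they see the same two colors outside $R$), or by adding edges between vertices of $R$ forced to differ. Because $G$ is not $3$-colorable, $G'_\varphi$ must be non-$3$-colorable for every $\varphi$, hence contains a $4$-critical subgraph of order strictly less than $|V(G)|$, to which the inductive bound $\rho \le 2$ applies.

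The technical heart of the proof, and the main obstacle I expect, is matching the potential gained from the inductive bound on $G'_\varphi$ against the losses incurred by the passage from $G$ to $G'_\varphi$ (which depend on the size and structure of the cut between $R$ and $V(G) \setminus R$), so as to contradict $\rho(G) \ge 3$. The equality $\rho(G) = \rho(G[R]) + \rho(G - R) - 5 \cdot 0 + 3 \cdot e(R, V(G) \setminus R)$ has to be controlled precisely, and the additive constant $-2/3$ in the statement leaves essentially no slack. I would expect a split into cases by the size and cut-structure of $R$: small-$R$ cases reduce to the absence of specific local configurations (low-degree vertices, triangles, short separating cycles), while large-$R$ cases are driven by the extremality of $\rho(G[R])$, which forces rigid bounds on how $R$ connects to its complement. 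The delicate bookkeeping needed to close both regimes simultaneously is what makes the sharp $(5n-2)/3$ bound hard to reach.
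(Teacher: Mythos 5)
You should first note that the paper does not prove this statement at all: Theorem~\ref{thm:4-critical} is imported verbatim from Kostochka and Yancey~\cite{KosYan14} and used as a black box throughout, so there is no in-paper argument to compare against. Your proposal therefore has to stand on its own as a proof of a genuinely deep result, and as written it does not. What you have is an accurate description of the \emph{framework} of the Kostochka--Yancey proof (the potential $\rho$, induction on a minimal counterexample, an extremal choice of $R$, a collapsing/auxiliary-graph construction, and the extraction of a smaller $4$-critical subgraph), but every step that carries the actual content is deferred. You do not state, let alone prove, the key lemma that in a minimal counterexample every proper subset $W$ with $|W|\ge 2$ has potential bounded below by a constant strictly larger than the target; you do not specify the auxiliary graph precisely enough to verify that it is not $3$-colorable and that its $4$-critical subgraph's potential can be transferred back to bound $\rho(G)$; and you explicitly concede that "the delicate bookkeeping needed to close both regimes simultaneously" is open. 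That bookkeeping \emph{is} the proof --- the reason the sharp constant $\tfrac{5n-2}{3}$ resisted earlier potential-style attempts is precisely that the naive accounting loses too much across the cut, and Kostochka--Yancey's contribution is the specific gadget (replacing a colored $R$ by a small clique attached to $V(G)\setminus R$ according to the color classes) together with a final case analysis that makes the amounts match.

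Two concrete symptoms that the accounting has not been done: first, your additivity identity has the wrong sign --- since $\rho(G)=5n-3m$ and $m = e(G[R]) + e(G-R) + e(R, V(G)\setminus R)$, one gets $\rho(G)=\rho(G[R])+\rho(G-R)-3\,e(R,V(G)\setminus R)$, not $+3\,e(R,V(G)\setminus R)$; second, the direction of your construction (coloring $G-R$ and constraining $R$) is the reverse of the one that is known to work (coloring $G[R]$, where $R$ is the low-potential set, and collapsing $R$ onto a triangle attached to $G-R$), and it is not clear the potential transfer closes in your direction. As it stands the proposal is a correct roadmap to the literature, not a proof.
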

Theorem~\ref{thm:4-critical} is a restricted version of a more general theorem from~\cite{KosYan18},
which describes $k$-critical graphs 
and was used in~\cite{BorDvoKosLidYan14} to characterize all planar $4$-critical graphs with exactly four triangles;
we present these two results in Section~\ref{sec:prel}.

Along with a short proof of Theorem~\ref{thm:3triangles}, using Theorem~\ref{thm:4-critical},
the authors of~\cite{BorKosLidYan14} presented short proofs of several other extensions of the Gr\"{o}tzsch Theorem, 
which guarantee $3$-colorability of graphs being close to triangle-free planar graphs.
In particular, they extended a result from~\cite{JenTho00} 
stating that a triangle-free planar graph with an additional vertex of degree $3$ is also $3$-colorable.
\begin{theorem}[\cite{BorKosLidYan14,JenTho00}]
	\label{thm:4vert}
	Let $G$ be a triangle-free planar graph and let $H$ be a graph such that $G = H - v$ for some vertex $v$ of degree $4$ of $H$. 
	Then $H$ is $3$-colorable.
\end{theorem}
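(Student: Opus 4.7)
The plan is to argue by contradiction using Theorem~\ref{thm:3triangles} together with the classification of planar $4$-critical graphs with exactly four triangles from~\cite{BorDvoKosLidYan14}, which will be recalled in Section~\ref{sec:prel}. Suppose that $H$ is not $3$-colorable, and let $H'\subseteq H$ be a $4$-critical subgraph. Since $G=H-v$ is triangle-free and planar, Gr\"otzsch's theorem gives a proper $3$-coloring of $G$, so $H'\not\subseteq G$ and therefore $v\in V(H')$.

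Next, I would bound the number of triangles of $H'$. As $H'-v$ is a subgraph of $G$, it is triangle-free, so every triangle of $H'$ passes through $v$. The triangles of $H'$ through $v$ correspond bijectively to the edges of $G$ between vertices of $N_{H'}(v)$. Since $N_{H'}(v)$ spans a triangle-free graph on at most four vertices, it contains at most four edges, with equality only if $|N_{H'}(v)|=4$ and these neighbors form a $4$-cycle in $G$. Consequently, $H'$ has at most four triangles. If it has at most three, Theorem~\ref{thm:3triangles} supplies a proper $3$-coloring of $H'$, contradicting its $4$-criticality.

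It remains to handle the case when $H'$ has exactly four triangles. Then necessarily $\deg_{H'}(v)=4$, the neighbors $v_1,v_2,v_3,v_4$ of $v$ form an induced $4$-cycle in $G$, and $H'-v$ is a triangle-free planar graph. At this point I would invoke the classification of planar $4$-critical graphs with exactly four triangles from~\cite{BorDvoKosLidYan14} and verify that no graph on the list can play the role of $H'$: in each case, either no vertex is incident to all four triangles, or no such vertex has the additional property that removing it leaves a triangle-free (planar) graph.

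The main obstacle is this last verification; it is a structural check carried out against the finitely many configurations produced by the classification, and all the preceding steps serve precisely to reduce the statement to this concrete case analysis. Once the classification rules out every candidate, the assumed $H'$ cannot exist, and so $H$ must be $3$-colorable.
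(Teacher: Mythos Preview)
This theorem is not proved in the present paper; it is quoted from~\cite{BorKosLidYan14,JenTho00} and used as a tool. So there is no ``paper's own proof'' to compare against, only the original proof in~\cite{BorKosLidYan14}, which proceeds by edge counting on the planar graph $G$ together with the Kostochka--Yancey bound (Theorem~\ref{thm:4-critical}) and a $4$-face reduction, and does \emph{not} rely on the later classification of~\cite{BorDvoKosLidYan14}.

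Your reduction up to the point ``$H'$ has exactly four triangles'' is correct. The gap is in the final step. The characterization you want to invoke from~\cite{BorDvoKosLidYan14} is a classification of \emph{planar} $4$-critical graphs with four triangles, but nothing in the hypotheses forces $H$ (or the $4$-critical subgraph $H'$) to be planar: only $G=H-v$ is assumed planar. In the four-triangle case you have correctly shown that $N_{H'}(v)$ is an induced $4$-cycle $C$ in the planar graph $H'-v$; however, if $C$ is a separating cycle in every planar embedding of $H'-v$, then there is no face into which $v$ can be inserted, and $H'$ need not be planar. One can try to rule this out by colouring $H'$ minus the interior of $C$ (a proper subgraph, hence $3$-colourable) and then extending across the $4$-cycle $C$, but that extension is precisely the $4$-face case of Theorem~\ref{thm:5face}, which in this paper is \emph{derived from} Theorem~\ref{thm:4vert}. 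So within the logical order here the argument is circular; you would need an independent proof that $C$ bounds a face.

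A second, smaller issue: the family of planar $4$-critical graphs with exactly four triangles described in~\cite{BorDvoKosLidYan14} is infinite, not a finite list of configurations. What one actually needs is the structural fact (used in the proof of Theorem~\ref{thm:3neighbors}) that the only $\Pl$-graph in which a single vertex lies on all four triangles is $K_4$, together with a reduction handling $4$-faces; your sketch does not isolate this statement. Even granting planarity of $H'$, your outline therefore calls for a considerably heavier and chronologically later result than the short argument in~\cite{BorKosLidYan14}.
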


They also gave a short proof of a precoloring extension result of Aksenov, Borodin, and Glebov~\cite{AksBorGle03}.
\begin{theorem}[\cite{AksBorGle03,BorKosLidYan14}]
	\label{thm:precolor2verts}
	Let $G$ be a triangle-free planar graph.
	Then each coloring of any two non-adjacent vertices can be extended to a $3$-coloring of $G$.
\end{theorem}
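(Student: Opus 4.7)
The plan is to assume a minimum counterexample and derive a contradiction by combining a sparsity upper bound for triangle-free planar graphs with the lower bound of Theorem~\ref{thm:4-critical}.

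Let $(G,\phi)$ be a counterexample with $|V(G)|$ minimum, where $G$ is a triangle-free planar graph and $\phi$ precolors two non-adjacent vertices $u,v$. The idea is to convert the precoloring constraint into a structural one by constructing an auxiliary graph $G^*$ with the property that $G^*$ is $3$-colorable if and only if $\phi$ extends to $G$. The minimality of $G$ then forces $G^*$ to contain a $4$-critical subgraph $H$, to which Theorem~\ref{thm:4-critical} applies.

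If $\phi(u)\neq \phi(v)$, I would set $G^* = G+uv$: a proper $3$-coloring of $G^*$ gives $u$ and $v$ distinct colors, and a suitable permutation of the three colors turns it into a $3$-coloring of $G$ matching $\phi$. Thus $G^*$ is not $3$-colorable, and any $4$-critical subgraph $H \subseteq G^*$ must contain $uv$ (otherwise $H\subseteq G$, contradicting Gr\"otzsch's theorem). Since $G$ is triangle-free, all triangles of $H$ pass through $uv$ and correspond to common neighbors of $u$ and $v$ in $G$. The graph $H-uv$ is a subgraph of the triangle-free planar graph $G$, so $|E(H-uv)| \le 2n_H - 4$, whence $|E(H)| \le 2n_H - 3$; comparing this with the Kostochka--Yancey bound $|E(H)| \ge (5n_H-2)/3$ from Theorem~\ref{thm:4-critical} constrains $n_H$ and, together with a careful count of triangles through $uv$ via Euler's formula, should yield a contradiction. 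If $\phi(u)=\phi(v)$, I would instead take $G^*$ to be obtained from $G$ by identifying $u$ and $v$ into a single vertex; a $3$-coloring of $G^*$ gives the same color to $u,v$ in $G$, and the triangles of $G^*$ now correspond to $3$-paths $u x y v$ in $G$ with $xy\in E(G)$. A symmetric edge-count analysis on a $4$-critical subgraph of $G^*$ is then applied.

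The main obstacle is that the gap between the planar sparsity bound $2n-4$ and the Kostochka--Yancey bound $(5n-2)/3$ is not by itself wide enough to force an immediate contradiction, so one must squeeze this slack by using the few triangles introduced by the construction and by appealing to the minimality of $G$ to forbid specific local configurations near $u$ and $v$ (common neighbors in the first case, chorded $3$-paths in the second), essentially reducing to smaller triangle-free planar precoloring instances covered by the minimality assumption. A secondary technicality in the identification case is ensuring planarity of $G^*$, which requires arguing that $u$ and $v$ can be placed on a common face in a planar embedding of $G$, or otherwise handling the non-coplanar case via a separate reduction.
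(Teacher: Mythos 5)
First, note that the paper does not prove this statement itself: it is imported from \cite{AksBorGle03,BorKosLidYan14}, and the ``short proof'' the paper alludes to (and whose template it reuses for its own Theorems~3.1 and~3.2) is precisely the combination of Borodin's Lemma~10 (Lemma~\ref{lem:10Borodin}) with the Kostochka--Yancey bound. Your overall framing --- minimal counterexample, reduce to a $4$-critical graph $H$ via adding $uv$ or identifying $u$ with $v$, then contradict Theorem~\ref{thm:4-critical} by an edge count --- is the right skeleton, but the count you set up does not close, and you say so yourself. From $|E(H)|\le 2n_H-3$ and $|E(H)|\ge (5n_H-2)/3$ you only get $n_H\ge 7$; moreover, the refinement you hope for (``a careful count of triangles through $uv$ via Euler's formula'') cannot rescue this: since all triangles of $H$ pass through the single edge $uv$, at most two faces of a plane embedding of $H$ are triangular, so $2m_H\ge 3\cdot 2+4(f_H-2)$ still only yields $m_H\le 2n_H-3$. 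The slack between $2n-O(1)$ and $\tfrac{5}{3}n-O(1)$ is of order $n/3$ and cannot be absorbed by a bounded number of exceptional faces.

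The missing idea is the elimination of $4$-faces. In a minimal counterexample one shows that $G$ has girth at least $5$: if $G$ has a $4$-face $v_1v_2v_3v_4$, then since $G$ is triangle-free the alternative conclusion of Lemma~\ref{lem:10Borodin} (a triangle $v_iv_{i+1}z$) is impossible, so for some $i$ identifying $v_i$ with $v_{i+2}$ keeps the graph triangle-free and planar, producing a strictly smaller instance of the same precoloring problem (with a few easy subcases when $\{u,v\}$ meets $\{v_i,v_{i+2}\}$), contradicting minimality. Once every face has length at least $5$, the Handshaking Lemma gives $2m_G\ge 5f_G$, hence $m_G\le (5n_G-10)/3$ by Euler's formula, and now both cases collapse: with $e=uv$ added, $m_H=m_G+1\le (5n_H-7)/3<(5n_H-2)/3$; with $u,v$ identified, $m_H\le m_G\le (5n_H-5)/3<(5n_H-2)/3$. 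Without this reduction (or an equivalent device) your argument is not a proof. A minor remark: your worry about planarity of $G^*$ in the identification case is a non-issue, because the Euler/Handshaking count is performed on the plane graph $G$ and transferred to $H$ via $n_G=n_H+1$, $m_G=m_H$ (exactly as the paper does in Case~1 of its Theorem~\ref{thm:triangle_plus_same_color_pair}); one never needs $H$ to be planar, only $4$-critical.
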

Note that Theorem~\ref{thm:precolor2verts} extends the result of Aksenov~\cite{Aks77} and Jensen and Thomassen~\cite{JenTho00}
that a graph obtained from a triangle-free planar graph by adding one edge is $3$-colorable.

From Theorems~\ref{thm:4vert} and~\ref{thm:precolor2verts} one can derive another precoloring extension result.
\begin{theorem}[\cite{BorKosLidYan14}]
	\label{thm:5face}
	Let $G$ be a triangle-free planar graph and let $f$ be a face of $G$ of length at most $5$.
	Then each $3$-coloring of $f$ can be extended to a $3$-coloring of $G$.
\end{theorem}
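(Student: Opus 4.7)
The plan is to split on the length of $f$, noting that since $G$ is triangle-free and simple we may assume $|f|\in\{4,5\}$, and to reduce each sub-case either directly to Theorem~\ref{thm:precolor2verts} or to Theorem~\ref{thm:4vert} applied after augmenting $G$ by a single auxiliary vertex placed inside $f$. For $|f|=4$, write the boundary as $v_1v_2v_3v_4$. If $c$ uses three colors, exactly one opposite pair receives the same color; say $c(v_2)=c(v_4)$ and $c(v_1)\neq c(v_3)$. I would apply Theorem~\ref{thm:precolor2verts} to the non-adjacent pair $v_1,v_3$ with their precolors and observe that in the resulting $3$-coloring $\phi$ of $G$, each of $v_2,v_4$ is adjacent to the pair $\{v_1,v_3\}$ of distinct colors and is therefore forced to the third color, matching $c$. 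If instead $c$ uses only two colors, insert a new vertex $w$ inside $f$ joined to all four boundary vertices; then $\deg(w)=4$ and $G$ is triangle-free, so Theorem~\ref{thm:4vert} yields a $3$-coloring $\phi$ of $G+w$. The $w$-edges force $v_1,v_2,v_3,v_4$ to use only two colors alternately around the $4$-cycle, and a suitable global color permutation aligns $\phi|_G$ with $c$.

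For $|f|=5$, write the boundary as $v_1v_2v_3v_4v_5$. Since $\chi(C_5)=3$, the precoloring $c$ uses all three colors, and up to rotation and relabeling it has the pattern $c(v_1)=c(v_3)$, $c(v_2)=c(v_4)$, with $c(v_5)$ the remaining color. The trick I would use is to insert a vertex $w$ inside $f$ joined to the four consecutive vertices $v_1,v_2,v_3,v_4$, skipping the uniquely-colored $v_5$. Then $w$ has degree $4$ in the resulting graph $H$ and $G=H-w$ is triangle-free, so by Theorem~\ref{thm:4vert} the graph $H$ admits a $3$-coloring $\phi$. The $w$-edges together with the path $v_1v_2v_3v_4$ force $\phi(v_1)=\phi(v_3)$ and $\phi(v_2)=\phi(v_4)$, and $\phi(w)$ is then the third color; moreover $v_5$ is adjacent to $v_1$ and $v_4$, whose colors differ, so $\phi(v_5)$ must also equal the third color. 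Hence $\phi|_f$ matches the pattern of $c$, and composing $\phi|_G$ with the appropriate permutation of colors provides the required extension.

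The main subtlety I anticipate is ensuring that the boundary walk of $f$ is a simple cycle, so that $v_1,\dots,v_{|f|}$ are distinct and the auxiliary vertex $w$ can be joined to distinct vertices without creating multi-edges; this is automatic when $G$ is $2$-connected, and the general case should reduce to the $2$-connected one by a standard block decomposition, since any precoloring of a face can be independently extended in each block containing it.
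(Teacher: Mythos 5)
Your proof is correct and takes exactly the route the paper intends: the text introduces Theorem~\ref{thm:5face} as a consequence of Theorems~\ref{thm:4vert} and~\ref{thm:precolor2verts}, and your case analysis (precoloring the two opposite, necessarily non-adjacent vertices of a $4$-face when three colors appear, and otherwise adding a degree-$4$ apex inside the face to force the required repetition pattern, then permuting colors) is the standard way of carrying out that derivation. The one loose end --- faces whose boundary walk is not a simple cycle --- you flag yourself, and your block-decomposition remark disposes of it.
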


On the other hand, if the face $f$ has length $k$ with $k \ge 6$, 
then not every precoloring of its vertices can be extended to a $3$-coloring of $G$. 
The cases when $k = 6,7,8,9$ were completely characterized in~\cite{GimTho97}, \cite{AksBorGle04},
\cite{DvoLid15}, and~\cite{ChoEksHolLid18}, respectively.
Moreover, precoloring faces in planar graph of girth at least $5$ have also been studied (see, e.g., \cite{ChoEksHolLid18} for more details).

\bigskip
In this paper, we introduce new results about $3$-colorability of planar graphs with small number of triangles and some precolored vertices.
First, we extend Theorem~\ref{thm:precolor2verts} to planar graphs with at most one triangle.
\begin{theorem}
	\label{thm:triangle_plus_precolored_pair}
	Let $G$ be a planar graph with at most one triangle.
	Then each coloring of any two non-adjacent vertices can be extended to a $3$-coloring of $G$.
\end{theorem}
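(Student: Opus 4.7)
My plan is a minimum-counterexample argument combined with the Kostochka--Yancey density bound. Let $(G,u,v,\varphi)$ be a counterexample minimizing $|V(G)|$ and, subject to that, maximizing $|E(G)|$: thus $G$ is a planar graph with at most one triangle, $u$ and $v$ are non-adjacent, and $\varphi\colon\{u,v\}\to\{1,2,3\}$ does not extend to a proper $3$-coloring of $G$. By Theorem~\ref{thm:3triangles}, $G$ itself is $3$-colorable, so the obstruction lies entirely in the precoloring. The central construction is to encode $\varphi$ into the graph: if $\varphi(u)=\varphi(v)$, let $G^{\star}$ be obtained from $G$ by identifying $u$ and $v$ into a single vertex $w$ (suppressing any parallel edges); if $\varphi(u)\ne\varphi(v)$, set $G^{\star}=G+uv$. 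Any proper $3$-coloring of $G^{\star}$ descends, after permuting the three colors, to a proper $3$-coloring of $G$ that extends $\varphi$. Hence $G^{\star}$ is not $3$-colorable and contains a $4$-critical subgraph $H$; since $G$ is $3$-colorable, $H\not\subseteq G$, so $H$ must contain the added edge $uv$ or the identified vertex $w$.

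I would then squeeze $H$ between two bounds. Theorem~\ref{thm:4-critical} gives $|E(H)|\ge(5|V(H)|-2)/3$. On the other hand, undoing the modification turns $H$ into a subgraph of $G$, hence a planar graph with at most one inherited triangle; any further triangles of $H$ must pass through the new edge or the new vertex, and are in bijection with certain common neighbors of $u$ and $v$ in $G$. Euler's formula for planar graphs with a prescribed (small) number of triangles then yields an upper bound on $|E(H)|$ that, against the Kostochka--Yancey lower bound, forces $H$ to be small and to have very few triangles. This should narrow $H$ down to one of the planar $4$-critical graphs with at most four triangles classified in~\cite{BorDvoKosLidYan14} and recalled in Section~\ref{sec:prel}, with $u,v$ (or $w$) occupying a very restricted position.

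The main obstacle is then to exclude each of the surviving configurations. Here I would combine standard minimum-counterexample reductions---$G$ is $2$-connected, low-degree vertex absorption is available, small separating cycles cannot occur, and any sufficiently short $uv$-path would let me transfer the precoloring into a strictly smaller instance via Theorem~\ref{thm:precolor2verts} applied to a triangle-free subproblem---with a careful local analysis around $u$ and $v$. The most delicate situation is the identification case when $u$ and $v$ share many common neighbors, since every such neighbor creates a new triangle at $w$ and threatens to push the triangle count of $H$ beyond what the classification permits. I expect the bookkeeping of triangles around $w$, and the case analysis matching the lone triangle of $G$ together with the newly created triangles against the rigid triangle structure of the classified $4$-critical graphs, to be the chief source of difficulty.
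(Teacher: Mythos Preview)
Your encoding step (identify $u,v$ if $\varphi(u)=\varphi(v)$, add the edge $uv$ otherwise, and pass to a $4$-critical $H$) matches the paper exactly; the paper proves the two cases separately as Theorems~\ref{thm:triangle_plus_edge} and~\ref{thm:triangle_plus_same_color_pair}. The gap is in your endgame. The Euler count on the planar pullback $H'\subseteq G$ together with Kostochka--Yancey does \emph{not} force $H$ to be small or to have few triangles: with one $3$-face and $f_4$ four-faces in $G$ one gets $m_H\le\tfrac{5n_H-5+f_4}{3}$ in the edge case (and an analogous bound in the identification case), which against $m_H\ge\tfrac{5n_H-2}{3}$ yields only $f_4\ge 3$. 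So the density comparison leaves you precisely in the regime where $G$ has several $4$-faces, and there is nothing further to squeeze. The tool your outline is missing is Lemma~\ref{lem:10Borodin} (Borodin's $4$-face lemma): for any $4$-face $v_1v_2v_3v_4$, identifying one diagonal pair creates no new triangle unless the face is adjacent to the unique triangle $T$ in a prescribed local configuration. This is what the paper uses to reduce a minimum counterexample whenever a $4$-face is present; the Euler/Kostochka--Yancey count then serves only as the base case (at most two $4$-faces in Theorem~\ref{thm:triangle_plus_edge}, at most one in Theorem~\ref{thm:triangle_plus_same_color_pair}).

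Your fallback to the classification of planar $4$-critical graphs with four triangles from~\cite{BorDvoKosLidYan14} has two further problems. First, $H$ need not be planar: neither adding an edge between two arbitrary vertices of a plane graph nor identifying two such vertices preserves planarity, so the planar classification does not apply to $H$. Second, that classification describes an infinite family built by explicit operations, not a finite list one could exclude configuration by configuration. The paper never relies on it; the single place where the equality case of Theorem~\ref{thm:4-critical_ore} is reached (Case~2 of Theorem~\ref{thm:triangle_plus_same_color_pair}) uses only Theorems~\ref{thm:4-critical_ore} and~\ref{thm:pl44f-graphs} to deduce that $H$ must carry at least five triangles, and then a direct separating-$4$-cycle argument via Theorem~\ref{thm:5face} finishes. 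In short: keep your encoding and the minimum-counterexample frame, but replace ``density $+$ classification'' by the $4$-face identification reductions of Lemma~\ref{lem:10Borodin} and Corollary~\ref{cor:lem10}.
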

The result is tight in terms of the number of precolored vertices and in terms of the number of triangles;
for example, the precolorings of graphs depicted in Figure~\ref{fig:precolored_pair} cannot be extended to a $3$-coloring of the whole graph.
\begin{figure}[htb!]
	\centering
	\begin{subfigure}[b]{.5\textwidth}
		$$
			\includegraphics[scale=\figscale]{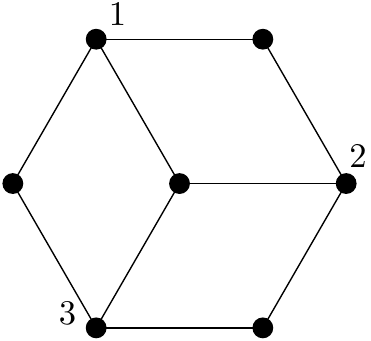}
		$$
		\caption{}
	\end{subfigure}	
	\begin{subfigure}[b]{.49\textwidth}
		$$
			\includegraphics[scale=\figscale]{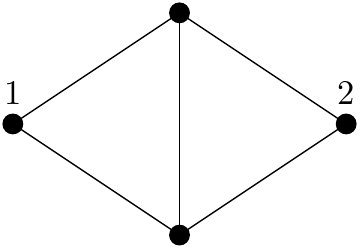}
		$$
		\caption{}
	\end{subfigure}	
	\caption{Not every precoloring of three vertices can be extended to a $3$-coloring of a planar graph with at most one triangle (example (a)),
		nor can be every precoloring of two vertices in a planar graph with two triangles (example (b)).}
	\label{fig:precolored_pair}
\end{figure}
	
As a corollary of Theorem~\ref{thm:triangle_plus_precolored_pair}, we obtain a theorem similar to Theorem~\ref{thm:4vert} for planar graphs with at most one triangle.
\begin{theorem}
	\label{thm:triangle_plus_3vertex}
	Let $G$ be a planar graph with at most one triangle and let $H$ be a graph such that $G = H - v$ for some vertex $v$ of degree at most $3$ in $H$,
	which is adjacent with at most two vertices of the triangle in $G$ if it exists. 
	Then $H$ is 3-colorable.
\end{theorem}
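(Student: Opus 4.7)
The plan is to reduce the statement directly to Theorem~\ref{thm:triangle_plus_precolored_pair}: I would identify a pair of non-adjacent vertices among the neighbors of $v$ in $H$, precolor them with the same color, extend the precoloring to a $3$-coloring of $G$, and then pick an available color for $v$.

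First, I would dispose of the easy case $\deg_H(v) \le 2$: by Theorem~\ref{thm:3triangles}, $G$ admits a $3$-coloring, and any such coloring extends to $H$ since $v$ has at most two neighbors and therefore at most two forbidden colors. The substantive case is $\deg_H(v) = 3$, with $N_H(v) = \{u_1, u_2, u_3\}$. The key structural observation is that these three vertices cannot induce a triangle in $G$: since $G$ has at most one triangle, such a triangle would have to be the unique triangle of $G$, and then $v$ would be adjacent to all three of its vertices, contradicting the hypothesis that $v$ meets at most two vertices of the triangle. Hence at least one pair in $N_H(v)$ is non-adjacent in $G$; without loss of generality, $u_1 u_2 \notin E(G)$.

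Now I would invoke Theorem~\ref{thm:triangle_plus_precolored_pair} on $G$ with the precoloring $\varphi(u_1) = \varphi(u_2) = 1$ and extend it to a $3$-coloring $\varphi$ of the whole of $G$. It then remains to color $v$ with a color distinct from $\varphi(u_1) = 1$ and $\varphi(u_3)$; the set $\{1,2,3\} \setminus \{1, \varphi(u_3)\}$ has at least one element, and any such color works, yielding a proper $3$-coloring of $H$.

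The main (and essentially only) obstacle is the structural observation above: the hypothesis on $v$'s adjacency to the triangle is precisely what is needed to avoid an obstruction of the form $K_4$ in $H$ and, equivalently, to guarantee a non-adjacent pair in $N_H(v)$ to which Theorem~\ref{thm:triangle_plus_precolored_pair} can be applied. Once this is noticed, the theorem does all the remaining work and no further case analysis or discharging is required.
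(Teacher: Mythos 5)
Your proposal is correct and is essentially the paper's own proof: both reduce to Theorem~\ref{thm:triangle_plus_precolored_pair} by finding a non-adjacent pair in $N_H(v)$, coloring that pair with the same color, and then coloring $v$ with a remaining color. You actually justify more carefully than the paper why such a non-adjacent pair must exist (the adjacency hypothesis on the triangle) and handle the degenerate case $\deg_H(v)\le 2$, but the argument is the same.
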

\begin{proof}
	Let $N(v) = \set{v_1, v_2, v_3}$.
	Without loss of generality, we may assume that $v_1$ and $v_2$ are not adjacent.
	Then, by Theorem~\ref{thm:triangle_plus_precolored_pair}, we can color $v_1$ and $v_2$ with the same color
	and so the three vertices in $N(v)$ will be colored with at most two colors, 
	which means there is an available color for coloring $v$.
\end{proof}

Again, the result is tight in terms of the number of precolored vertices and in terms of the number of triangles 
(see Figure~\ref{fig:precolored_3vert} for examples),
as well as in terms of the number of neighbors of $v$ on the triangle.
Clearly, connecting $v$ with all three vertices of the triangle would result in a subgraph isomorphic to $K_4$.
\begin{figure}[htb]
	\centering
	\begin{subfigure}[b]{.5\textwidth}
		$$
			\includegraphics[scale=\figscale,page=2]{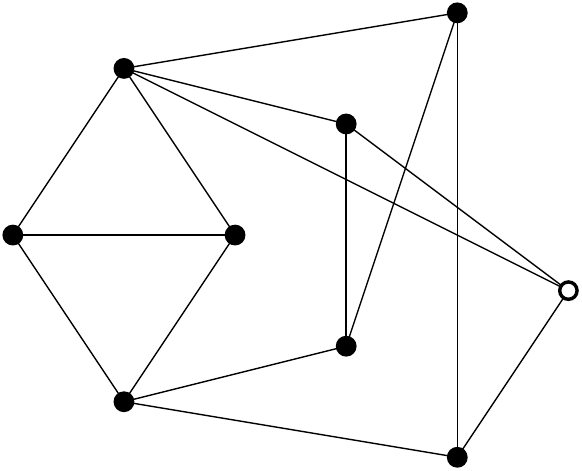}
		$$
		\caption{}
	\end{subfigure}	
	\begin{subfigure}[b]{.49\textwidth}
		$$
			\includegraphics[scale=\figscale,page=1]{fig_3vert}
		$$
		\caption{}
	\end{subfigure}	
	\caption{Not every graph obtained from a planar graph with at most one triangle by adding a $4$-vertex is $3$-colorable 
		(example (a)),
		nor is a graph obtained from a planar graph with two triangles by adding a $3$-vertex 
		(example (b)).
		The added vertex is depicted with an empty disk in both cases.}
	\label{fig:precolored_3vert}
\end{figure}

Extending precolorings of small faces in planar graphs with one triangle is more restricted.
We prove an analogue of Theorem~\ref{thm:5face} for faces of length at most $4$.
\begin{theorem}
	\label{thm:onetriangle_5face}
	Let $G$ be a planar graph with at most one triangle and let $f$ be a face of $G$ of length at most $4$.
	Then each $3$-coloring of $f$ can be extended to a $3$-coloring of $G$.
\end{theorem}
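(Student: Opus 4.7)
The proof splits by the length of $f$.

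If $|f|=3$, then $f$ is the unique triangle of $G$. By Theorem~\ref{thm:3triangles}, $G$ admits a proper $3$-coloring $\psi$. Since any proper $3$-coloring uses three distinct colors on a triangle, composing $\psi$ with a suitable permutation of $\{1,2,3\}$ produces an extension of the precoloring on $f$.

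Suppose $|f|=4$, with $f=uvwx$ in cyclic order. A proper $3$-coloring of a $4$-cycle either uses exactly two colors on $f$ (one on each diagonal) or exactly three (with one color repeated on a single diagonal); in both situations some pair of opposite vertices shares a color, and we may assume $c(u)=c(w)=\alpha$. If additionally $c(v)\neq c(x)$, we apply Theorem~\ref{thm:triangle_plus_precolored_pair} to the non-adjacent pair $\{v,x\}$ (non-adjacent, as $f$ is a chordless $4$-face) precolored by $c$; the resulting $3$-coloring $\psi$ forces $\psi(u)=\psi(w)=\alpha$, because $u$ and $w$ are each adjacent to two vertices of distinct colors and so must receive the unique remaining color.

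The remaining subcase is $c(v)=c(x)=\beta$, where only two colors appear on $f$. Here the plan is to pass to the auxiliary planar graph $G^\ast$ obtained by identifying $v$ with $x$ through $f$ into a single vertex $v^\ast$ (the pair $\{u,w\}$ remains non-adjacent in $G^\ast$). Applying Theorem~\ref{thm:triangle_plus_precolored_pair} to $G^\ast$ with $\{u,w\}$ precolored by $\alpha$ yields a $3$-coloring $\psi$ in which $v^\ast$, being adjacent to both $u$ and $w$ of color $\alpha$, takes either $\beta$ or the third color $\gamma$. In the latter case, a global swap $\beta\leftrightarrow\gamma$ (which fixes $\alpha$) produces $\psi(v^\ast)=\beta$; pulling back to $G$ by setting $\psi(v)=\psi(x)=\psi(v^\ast)$ then yields the required extension.

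The main obstacle lies in this last subcase: ensuring that $G^\ast$ still has at most one triangle so that Theorem~\ref{thm:triangle_plus_precolored_pair} applies. The identification can create new triangles of the form $v^\ast a b$ arising from paths $v\text{-}a\text{-}b\text{-}x$ of length three in $G$ with $ab\in E(G)$. The crux of the proof is to exclude such obstructing paths in a minimum counterexample, by combining the planarity of $G$, the at-most-one-triangle hypothesis, and, if necessary, the density bound of Theorem~\ref{thm:4-critical} for $4$-critical graphs; if either identification (of $\{v,x\}$ or, by symmetry, of $\{u,w\}$) preserves the at-most-one-triangle property, the reduction above goes through.
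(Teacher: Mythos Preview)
Your plan is sound through the first two subcases. For $|f|=3$ and for $|f|=4$ with three colors, your argument is essentially the paper's (the paper phrases the three-color case as adding an edge across the distinctly colored diagonal and invoking Theorem~\ref{thm:triangle_plus_edge}; precoloring that diagonal via Theorem~\ref{thm:triangle_plus_precolored_pair} is equivalent, and your observation that $u$ and $w$ are then forced to $\alpha$ is correct).

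The genuine gap is in the two-color subcase. You correctly isolate the crux---what to do when \emph{both} identifications across $f$ raise the triangle count---but you do not resolve it, and the tool you reach for will not work. A minimal counterexample to this theorem is not $4$-critical: $G$ itself is $3$-colorable by Theorem~\ref{thm:3triangles}, and there is no natural auxiliary graph here whose $4$-criticality would let you invoke Theorem~\ref{thm:4-critical}. So the density bound gives no traction.

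What the paper does instead is structural. When both identifications across $\alpha=v_1v_2v_3v_4$ create new triangles, Lemma~\ref{lem:10Borodin} forces vertices $x,y,z$ with $zv_1,zv_2,xz,xv_4,yz,yv_3\in E(G)$ and $T=zv_1v_2$; the two-color precoloring of $f$ then determines the colors of $x,y,z$ as well. One now examines the $4$-cycles $C_1=zv_1v_4x$ and $C_2=zv_2v_3y$. If either is separating, color the side containing $T$ and $f$ by minimality and push across $C_i$ using Theorem~\ref{thm:5face} (that side is triangle-free). If both are faces, the same reasoning shows $T$ is a face too, and the already-colored $5$-cycle $v_3v_4xzy$ bounds everything else; Theorem~\ref{thm:5face} finishes. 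The missing ingredients in your plan are thus Lemma~\ref{lem:10Borodin} and Theorem~\ref{thm:5face}, not the Kostochka--Yancey bound.
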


On the other hand, a precoloring of a $5$-face in a planar graph with one triangle cannot always be extended to a $3$-coloring of the whole graph;
see example in Figure~\ref{fig:precolor5face}.
\begin{figure}[htb]
	$$
		\includegraphics{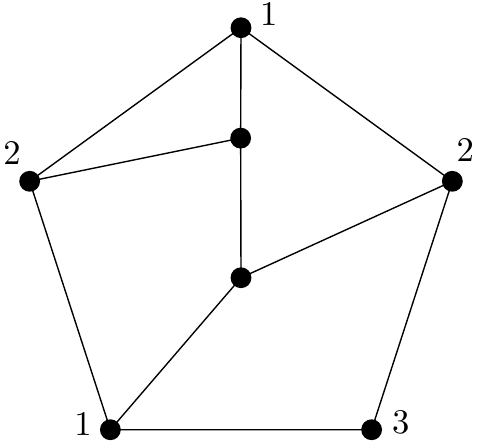}
	$$
	\caption{Precoloring of the outer $5$-face which cannot be extended to a $3$-coloring of the graph.}
	\label{fig:precolor5face}
\end{figure}

\medskip
The result about extending a precoloring of an $8$-cycle from~\cite{DvoLid15} (as remarked in~\cite{ChoEksHolLid18})
implies the following.
\begin{theorem}[\cite{DvoLid15}]
	\label{thm:4common}
	Let $G$ be a triangle-free planar graph and let $v$ be a vertex of degree at most $4$ in $G$. 
	Then there exists a $3$-coloring of $G$ where all neighbors of $v$ are colored with the same color.
\end{theorem}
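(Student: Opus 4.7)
The plan is to reduce to the case $\deg(v)=4$ and then construct an auxiliary triangle-free plane graph containing $v_1,\ldots,v_4$ on an $8$-cycle bounding a face, so that the precoloring-extension theorem for $8$-cycles from \cite{DvoLid15} can be applied with all four neighbors precolored the same color.

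I would dispose of small degrees first. When $\deg(v)\le 1$ the conclusion is immediate from Gr\"{o}tzsch's theorem. When $\deg(v)=2$, the two neighbors of $v$ are non-adjacent (since $G$ is triangle-free), and Theorem~\ref{thm:precolor2verts} yields a $3$-coloring in which both receive the same color. When $\deg(v)=3$ I would attach a new pendant vertex $v_4$ to $v$ inside some face incident with $v$: this preserves triangle-freeness and planarity, $v$ now has degree $4$, and any $3$-coloring of the larger graph in which all four neighbors of $v$ share a color restricts to a $3$-coloring of $G$ with the three original neighbors sharing a color. It therefore suffices to treat $\deg(v)=4$ with $N(v)=\{v_1,v_2,v_3,v_4\}$ in the cyclic order around $v$ in a fixed plane embedding.

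Next, let $G_0 = G - v$. The vertices $v_1,\ldots,v_4$ lie on the boundary of the face of $G_0$ formerly containing $v$. Inside that face I insert four new vertices $u_1,u_2,u_3,u_4$ together with the edges of the $8$-cycle $C = v_1 u_1 v_2 u_2 v_3 u_3 v_4 u_4 v_1$, obtaining a plane graph $G^{\star}$ that I re-embed so that $C$ bounds the outer face. This graph is triangle-free: no edge $v_i v_k$ is present (otherwise $v,v_i,v_k$ would form a triangle in $G$), the $u_j$'s are pairwise non-adjacent, and $G_0 \subseteq G$ is triangle-free.

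Finally I would apply the $8$-cycle precoloring-extension theorem of \cite{DvoLid15} to $G^{\star}$ with the precoloring $\phi(v_i) = 1$ for each $i$ and $\phi(u_j) \in \{2,3\}$. Since the $u_j$'s are pairwise non-adjacent on $C$, their colors may be chosen independently, giving sixteen candidate precolorings. Any extension restricts to a $3$-coloring of $G_0$ in which $v_1,\ldots,v_4$ all receive color $1$, and colouring $v$ with $2$ or $3$ then produces the desired $3$-coloring of $G$. The principal obstacle, and the content of the observation made in \cite{ChoEksHolLid18}, is to verify that among the sixteen candidates at least one avoids the finite list of non-extendable configurations characterized in \cite{DvoLid15}; the key point enabling this verification is that any such configuration living inside $G^{\star}$ would pull back, through the added cycle $C$, to a forbidden structure already present in the original triangle-free planar graph $G$.
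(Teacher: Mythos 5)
First, note that the paper does not prove Theorem~\ref{thm:4common} at all: it is imported from the literature, with the one-line remark that it follows from the $8$-cycle precoloring-extension result of~\cite{DvoLid15} as observed in~\cite{ChoEksHolLid18}. So your proposal cannot be measured against an in-paper argument, only against that citation chain. Your reduction is consistent with it, and the easy parts are sound: the cases $\deg(v)\le 2$ follow from Gr\"{o}tzsch and Theorem~\ref{thm:precolor2verts}, the pendant-vertex trick correctly lifts $\deg(v)=3$ to $\deg(v)=4$, and the gadget $G^{\star}$ (delete $v$, insert $u_1,\dots,u_4$ to form a facial $8$-cycle $C=v_1u_1v_2u_2v_3u_3v_4u_4$) is indeed triangle-free and planar with $C$ bounding a face, since no $v_iv_j$ can be an edge of $G$.

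The genuine gap is the last step, which is where all of the mathematical content of the theorem lives. Applying~\cite{DvoLid15} requires showing that at least one of the sixteen precolorings $\phi(v_i)=1$, $\phi(u_j)\in\{2,3\}$ avoids every obstruction in the Dvo\v{r}\'{a}k--Lidick\'{y} characterization of non-extendable precolorings of a facial $8$-cycle. You do not state that characterization, let alone check the candidates against it; you only assert that any obstruction in $G^{\star}$ ``would pull back to a forbidden structure already present in $G$.'' That sentence has no content as written: $G$ is an arbitrary triangle-free planar graph and carries no forbidden structures a priori. What one must actually prove is that the \emph{simultaneous} failure of all sixteen precolorings forces something impossible --- a triangle, a chord of $C$, or a non-planar configuration --- and that argument (which must engage with the explicit list of obstructions, e.g.\ near-quadrangulations of the $8$-face and vertices with several neighbours on $C$) is entirely absent. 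Until that verification is carried out, the proposal is a plausible reduction scheme rather than a proof.
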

A similar result to Theorem~\ref{thm:4common} about coloring three neighbors of a vertex of an arbitrary degree 
can be obtained as a corollary of Theorem~\ref{thm:4vert}.
\begin{corollary}
	Let $G$ be a triangle-free planar graph and let $v_1$, $v_2$, and $v_3$ be distinct vertices with a common neighbor $v$.
	Then there exists a $3$-coloring of $G$ where $v_1$, $v_2$, and $v_3$ are colored with the same color.
\end{corollary}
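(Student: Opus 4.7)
The plan is to reduce the corollary to Theorem~\ref{thm:4vert} by augmenting $G$ with a single auxiliary degree-$4$ vertex that sees the common neighbour $v$ together with all of $v_1, v_2, v_3$.

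Concretely, I would let $u$ be a new vertex and define $H$ to be the graph obtained from $G$ by adding $u$ together with the four edges $uv$, $uv_1$, $uv_2$, $uv_3$. Then $H - u = G$ is triangle-free and planar by hypothesis, and $u$ has degree exactly $4$ in $H$. Consequently, Theorem~\ref{thm:4vert} directly supplies a proper $3$-coloring $c$ of $H$.

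From $c$ the color pattern on $\{v, v_1, v_2, v_3\}$ is completely pinned down: the four edges incident to $u$ force $c(u)$ to differ from each of $c(v), c(v_1), c(v_2), c(v_3)$, while the three edges $vv_i$ already present in $G$ force $c(v)$ to differ from $c(v_i)$ for every $i \in \{1,2,3\}$. With only three colors available, each $c(v_i)$ is therefore forced to equal the unique element of $\{1,2,3\} \setminus \{c(u), c(v)\}$, which does not depend on $i$. Hence $c(v_1) = c(v_2) = c(v_3)$, and the restriction of $c$ to $V(G) = V(H) \setminus \{u\}$ is the desired $3$-coloring of $G$.

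The only point that requires a moment of care is that $H$ itself need not be planar: when $v$ has large degree, the four edges incident to $u$ cannot in general be drawn without crossings, and indeed the triangles $uvv_i$ mean $H$ is not even triangle-free. This causes no difficulty, because the hypothesis of Theorem~\ref{thm:4vert} asks only that $H - u$ be a triangle-free planar graph and that $u$ have degree $4$ in $H$; no planarity or triangle-freeness assumption is imposed on $H$ itself.
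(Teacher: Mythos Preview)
Your proof is correct and follows exactly the same approach as the paper: both add a new degree-$4$ vertex adjacent to $v,v_1,v_2,v_3$, apply Theorem~\ref{thm:4vert}, and observe that the three $v_i$ are forced into the unique color avoided by both $v$ and the new vertex. Your closing remark that $H$ need not be planar or triangle-free is a useful clarification (the paper leaves this implicit), and your reading of the hypotheses of Theorem~\ref{thm:4vert} is accurate.
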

\begin{proof}
	By Theorem~\ref{thm:4vert}, the graph obtained from $G$ by adding a $4$-vertex $x$ adjacent to $v$, $v_1$, $v_2$, and $v_3$ is $3$-colorable.
	In its coloring, the vertices $v_1$, $v_2$, and $v_3$ are colored with the same color, since they must all be colored differently from $v$ and $x$,
	which receive two distinct colors.
\end{proof}

We prove a somewhat weaker result for the case of planar graphs with one triangle.
Let $K_4'$ be the graph obtained from $K_4$ by subdividing once the three edges incident with a vertex $v$ (see Figure~\ref{fig:K4sub}).
We call a graph {\em $K_4'$-free} if it does not contain $K_4'$ as a subgraph in such a way 
that the vertex $v$ of $K_4'$ has degree $3$ also in $G$.
It is easy to see that the vertices in the neighborhood of $v$ cannot be colored with a same color.
\begin{figure}[htb]
	$$
		\includegraphics{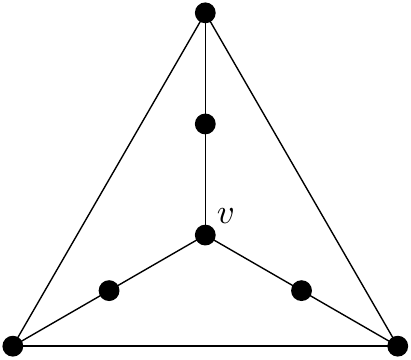}
	$$
	\caption{A planar graph with at most one triangle 
		with a vertex $v$ of degree $3$ having an independent neighborhood $N(v)$ for which there is no $3$-coloring such 
		that all vertices in $N(v)$ receive the same color.}
	\label{fig:K4sub}
\end{figure}

\begin{theorem}
	\label{thm:3neighbors}
	Let $G$ be a $K_4'$-free planar graph with at most one triangle.
	Then, for every vertex of degree at most $3$ with an independent neighborhood,
	a precoloring of its neighbors with the same color can be extended to a $3$-coloring of $G$. 
\end{theorem}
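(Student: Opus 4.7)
The plan is to first dispose of the low-degree case and then realize the problem as a $3$-coloring question on an auxiliary planar graph, handled by a minimum counterexample and the structural theory of $4$-critical planar graphs. If $\deg_G(v) \le 2$, then the (at most two) neighbors of $v$ are pairwise non-adjacent, so precoloring them with a common color extends to a $3$-coloring of $G$ by Theorem~\ref{thm:triangle_plus_precolored_pair}, after which $v$ receives any of the two remaining colors. Henceforth assume $\deg_G(v) = 3$ with $N(v) = \{v_1,v_2,v_3\}$ independent. Let $G^{\star}$ be the graph obtained from $G-v$ by identifying $v_1,v_2,v_3$ into a single vertex $w$. A proper $3$-coloring of $G^{\star}$ yields a $3$-coloring of $G-v$ in which $v_1,v_2,v_3$ share a color, and assigning $v$ either of the remaining two colors completes the desired $3$-coloring of $G$. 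Moreover, $G^{\star}$ is planar: in the planar embedding of $G$, deleting $v$ merges the three faces incident to $v$ into a single face of $G-v$ whose boundary contains $v_1,v_2,v_3$, and the identification can be realized by drawing three arcs from the $v_i$'s to a common point inside that face and contracting them.

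Assume, for contradiction, that $G$ is a counterexample on the fewest vertices, so $G^{\star}$ is not $3$-colorable. Any $4$-critical subgraph $H$ of $G^{\star}$ must contain $w$, since $G-v$ itself is $3$-colorable by Theorem~\ref{thm:3triangles}. The triangles of $G^{\star}$ fall into two families: those contained entirely in $G-v$ (at most one, inherited from the unique possible triangle of $G$), and triangles $wab$ through $w$. In the latter case, $ab$ is an edge of $G-v$ with $a\in N_G(v_i)$ and $b\in N_G(v_j)$ for some $i,j\in\{1,2,3\}$; when $i=j$ this encodes the triangle $av_ib$ in $G$, whereas when $i\neq j$ the configuration is equivalent to a $5$-cycle $av_ivv_jb$ in $G$ through $v$ with chord $ab$.

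Applying Theorem~\ref{thm:4-critical} (together with the finer structural description of $4$-critical planar graphs with few triangles from~\cite{BorDvoKosLidYan14} recalled in Section~\ref{sec:prel}) to $H$ then constrains its shape, and the $K_4'$-free hypothesis is invoked to exclude the only extremal configuration compatible with those bounds: namely, the coexistence of three such $5$-cycles through $v$, one for each pair of indices in $\{1,2,3\}$, whose chord endpoints together with $v_1,v_2,v_3$ and $v$ would display a $K_4'$ centered at $v$ inside $G$. Eliminating this possibility contradicts the $4$-criticality of $H$ and finishes the proof. The main technical obstacle lies in this last step: $G^{\star}$ may carry many new triangles through $w$, so a careful case analysis on the structure of $H$ is needed; non-$K_4'$ configurations will have to be dissolved by invoking the earlier extension theorems (Theorems~\ref{thm:triangle_plus_precolored_pair} and~\ref{thm:onetriangle_5face}) on strictly smaller subgraphs, contradicting the minimality of $G$.
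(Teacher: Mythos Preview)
Your overall framework coincides with the paper's: reduce to $3$-coloring the planar graph $H$ obtained by contracting $N[v]$ to a single vertex $w$, observe that $H$ (or a $4$-critical piece of it containing $w$) must be $4$-critical, and confront this with the Kostochka--Yancey bound and the $\Pl$-classification to force the $K_4'$ configuration. So the skeleton is right.

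The genuine gap is the step you yourself flag as ``the main technical obstacle'': you never say how to bound the triangles of $H$ so that Theorem~\ref{thm:4-critical} or Theorem~\ref{thm:4-critical_ore} actually bites. The new triangles through $w$ correspond to $5$-cycles of $G$ through $v$, and there can be arbitrarily many of these, so appealing directly to the $\Pl$-graph classification on $H$ is not available. The paper resolves this not by analyzing triangles of $H$, but by a case split on the \emph{$4$-faces of $G$}. When $G$ has no $4$-face, Euler's formula on $G$ (transferred to $H$ via $n_G=n_H+3$, $m_G=m_H+3$, $f_G=f_H$) gives $m_H\le\frac{5n_H-2}{3}$, so equality holds and $H$ is $4$-Ore; two preliminary claims (that $H$ has no separating triangle, and that any separating $4$-cycle of $H$ must trap $w$ or $T$ on each side) then cap the triangle count at four, whence Theorem~\ref{thm:pl44f-graphs} and the explicit list in~\cite{BorDvoKosLidYan14} force $H=K_4$. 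When $G$ does have a $4$-face, the paper uses Lemma~\ref{lem:10Borodin} together with those same two claims to reduce. None of this machinery---the face-counting transfer, the separating-cycle claims, or the invocation of Lemma~\ref{lem:10Borodin}---appears in your plan, and the references you propose (Theorems~\ref{thm:triangle_plus_precolored_pair} and~\ref{thm:onetriangle_5face}) do not by themselves supply the missing reductions.

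A smaller point: your description of the extremal configuration is off. The conclusion is not that three $5$-cycles with chords assemble into a copy of $K_4'$ inside $G$; rather, once $H=K_4$ one reads off that each $v_i$ has degree $2$ in $G$, so $G$ is literally $K_4'$, contradicting the hypothesis directly.
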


Theorem~\ref{thm:3neighbors} is tight in terms of the degree of a vertex and in terms of the number of triangles 
(see examples in Figure~\ref{fig:thm11_tight}).
\begin{figure}[htb]
	\centering
	\begin{subfigure}[b]{.5\textwidth}
		$$
			\includegraphics[scale=\figscale]{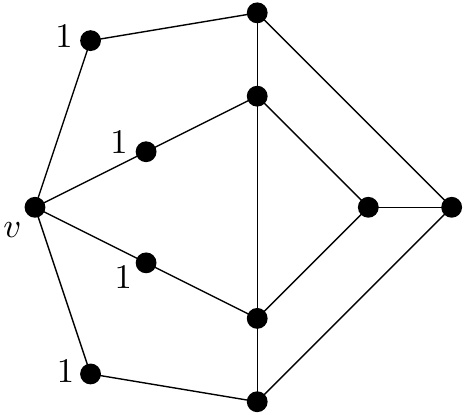}
		$$
		\caption{}
		\label{fig:4vert}
	\end{subfigure}	
	\begin{subfigure}[b]{.49\textwidth}
		$$
			\includegraphics[scale=\figscale]{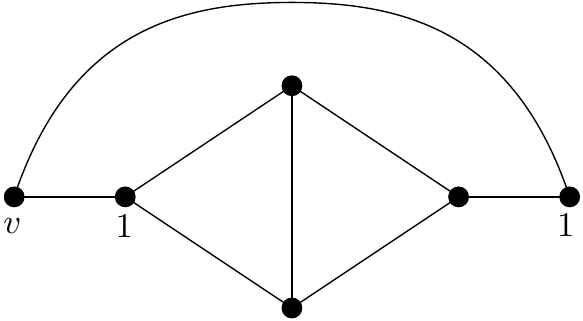}
		$$
		\caption{}
		\label{fig:2trian}
	\end{subfigure}
	\caption{
		Precoloring of the neighborhood of a $4$-vertex $v$ in a $K_4'$-free planar graph $G$ with one triangle
		cannot always be extended to a $3$-coloring of $G$ (example (a)).
		Similarly, precoloring of the neighborhood of a $2$-vertex $v$ in a planar graph $G$ with two triangles
		cannot always be extended to a $3$-coloring of $G$ (example (b)).}
	\label{fig:thm11_tight}
\end{figure}

\section{Preliminaries}
	\label{sec:prel}

In this section we present the terminology and the auxiliary results that we are using in the proofs of our theorems.

Note that we only consider simple graphs, i.e., loopless graphs without parallel edges;
thus, whenever we perform identification of vertices in our proofs, we discard eventual parallel edges.

For a graph $G$, we denote the number of its vertices and edges by $n_G$ and $m_G$, respectively.
If $G$ is a {\em plane graph}, i.e., a planar graph embedded in the plane, 
we denote the set of its faces by $F(G)$ and their number by $f_G$;
in particular, the number of faces of length $k$ is denoted by $f_{k,G}$
or simply $f_k$ if $G$ is evident from the context.
The length of a face $\alpha$ is denoted by $\ell(\alpha)$.
A vertex of degree $k$ (resp., at least $k$) is called a {\em $k$-vertex} (resp., a {\em $k^+$-vertex}), 
and similarly, a face of length $k$ is called a {\em $k$-face}.

We denote the graph obtained from a graph $G$ by deleting a vertex $v$ (resp., an edge $e$) by $G - v$ (resp., $G - e$).
A graph is {\em $k$-critical}, if $\chi(G) = k$ and for any $x \in V(G) \cup E(G)$, $\chi(G - x) < k$.
A subgraph of $G$ induced by a set of vertices $U$ is denoted by $G[U]$.

For a given cycle $C$ in a plane embedding of a graph $G$, we define $\inter(C)$ 
to be the graph induced by the vertices lying strictly in the interior of $C$.
Similarly, $\exter(C)$ is the graph induced by the vertices lying strictly in the exterior of $C$. 
A {\em separating} cycle is a cycle $C$ such that $\inter(C) \neq \emptyset$ and $\exter(C) \neq \emptyset$.

\medskip
The following lemma is a crucial tool in the proofs, where we use minimality of counterexamples;
see, e.g.,~\cite{BorKosLidYan14} for its proof.
\begin{lemma}[Lemma 10, Borodin]
	\label{lem:10Borodin}
	Let $G$ be a plane graph and $F = v_1v_2v_3v_4$ be a $4$-face in $G$ such that \hbox{$v_1v_3$, $v_2v_4\notin E(G)$}. 
	Let $G_i$ be obtained from $G$ by identifying $v_i$ and $v_{i+2}$ where $i \in \{1,2\}$. 
	If the number of triangles increases in both $G_1$ and $G_2$, 
	then there exists a triangle $v_iv_{i+1}z$ for some $z \in V(G)$ and $i\in \{1,2,3,4\}$. 
	Moreover, $G$ contains vertices $x$ and $y$ not in $F$ such that $v_{i+1}zxv_{i+3}$ and $v_izyv_{i+2}$ are paths in $G$ 
	(indices are modulo $4$).
\end{lemma}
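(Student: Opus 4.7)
My plan is to convert the two hypotheses about triangle counts into structural statements about length-$3$ paths in $G$, and then use planarity of the embedding to force these paths to share an internal vertex, from which the triangle and the auxiliary paths can be read off.

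First, I would translate what it means for the triangle count to strictly increase when $v_1$ and $v_3$ are identified into a single vertex $u$. Since $v_1v_3 \notin E(G)$ no loops appear, and because no triangle of $G$ can contain both $v_1$ and $v_3$, no triangle of $G$ is destroyed; at worst, pairs of triangles of the form $v_1ab, v_3ab$ collapse into a single triangle $uab$ of $G_1$. Hence an increase forces a \emph{new} triangle $uab$ in $G_1$ that is not the image of any triangle of $G$. Unpacking this, I obtain $a \in N(v_1)\setminus N(v_3)$, $b \in N(v_3)\setminus N(v_1)$, and $ab\in E(G)$, so $P_1 = v_1 a b v_3$ is a path of length $3$ in $G$. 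Because $v_2, v_4 \in N(v_1)\cap N(v_3)$ and $v_1v_3\notin E(G)$, the internal vertices $a,b$ must lie outside $V(F)$. The same argument applied to $G_2$ yields a path $P_2 = v_2 c d v_4$ of length $3$ with $c,d \notin V(F)$.

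Second, I would invoke planarity. Since $F$ is a face, the 4-cycle $v_1v_2v_3v_4$ bounds, on its other side, a topological disk $D$ containing the rest of the embedding, with $v_1,v_2,v_3,v_4$ appearing in cyclic order on $\partial D$. Both $P_1$ and $P_2$ have their interiors strictly inside $D$, and their endpoints alternate on $\partial D$. The path $P_1$ therefore separates $D$ into two sub-regions, one containing $v_2$ on its boundary and the other containing $v_4$; since $P_2$ links these two sub-regions and $G$ contains no edge crossings, $P_1$ and $P_2$ must share a vertex. Because their endpoints are pairwise disjoint, this common vertex $z$ is internal to both paths, so $z \in \{a,b\} \cap \{c,d\}$.

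Third, I would finish with a short case analysis on which of $a,b$ coincides with which of $c,d$. In each of the four cases $z = a = c$, $z = a = d$, $z = b = c$, $z = b = d$, the two neighbors of $z$ on $P_1$ and $P_2$ that are endpoints of those paths are consecutive vertices of $F$, yielding the triangle $v_iv_{i+1}z$ for the corresponding $i\in\{1,4,2,3\}$ (indices modulo $4$); the remaining internal vertex of $P_1$ (respectively, $P_2$) plays the role of $y$ (resp.\ $x$) in the required paths $v_i z y v_{i+2}$ and $v_{i+1} z x v_{i+3}$. Both $x$ and $y$ belong to $\{a,b,c,d\}\setminus\{z\}$ and are therefore outside $V(F)$.

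The main obstacle, in my view, is the planarity step: one has to be sufficiently careful with the Jordan curve theorem to exclude the possibility that $P_2$ ``goes around'' $P_1$ without meeting it. Once this is established, Step 1 is algebraic bookkeeping on triangles before and after identification, and Step 3 is a routine case analysis driven entirely by how $z$ sits on the two paths.
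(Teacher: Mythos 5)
Your proof is correct and is essentially the standard argument for this lemma: the paper itself gives no proof but defers to~\cite{BorKosLidYan14}, where the reasoning is exactly yours --- a strict increase in the triangle count under identification of a diagonal pair forces a genuinely new triangle, hence a $3$-path between the two identified vertices whose internal vertices avoid $F$, and planarity (the Jordan-curve separation of the disk on the non-face side of $F$) forces the two paths arising from the two diagonals to share an internal vertex $z$, from which the triangle $v_iv_{i+1}z$ and the paths through $x$ and $y$ are read off. I see no gaps; your handling of the bookkeeping (collapsing pairs $v_1ab$, $v_3ab$ versus new triangles, and the exclusion of $a,b,c,d$ from $V(F)$) is exactly what the cited proof requires.
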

\begin{figure}[htb!]
	$$
		\includegraphics{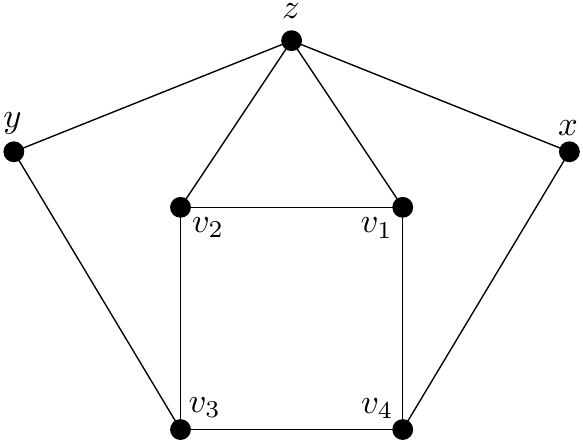}
	$$
	\caption{The configuration in Lemma~\ref{lem:10Borodin} in the case of a graph with one triangle.}
	\label{fig:10Borodin}
\end{figure}

In the case of planar graphs with one triangle, 
we can use the following simpler statement of Lemma~\ref{lem:10Borodin}.
\begin{corollary}
	\label{cor:lem10}
	Let $G$ be a plane graph with at most one triangle and let $\alpha$ be any $4$-face of $G$. 
	Then, at least one of the following holds:
	\begin{itemize}
		\item[$(a)$] $\alpha$ is adjacent to a triangle, or
		\item[$(b)$] for at least one pair of opposite vertices of $\alpha$, we can identify them without creating any new triangles.
	\end{itemize}
\end{corollary}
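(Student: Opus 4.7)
The plan is to deduce the corollary directly from Lemma~\ref{lem:10Borodin}. Write $\alpha = v_1v_2v_3v_4$. My first step would be to verify the non-adjacency hypothesis of Lemma~\ref{lem:10Borodin}: if, say, $v_1v_3 \in E(G)$, then both $v_1v_2v_3$ and $v_1v_3v_4$ are triangles in $G$, contradicting the assumption that $G$ contains at most one triangle. The same argument rules out $v_2v_4 \in E(G)$, so both diagonals of $\alpha$ are missing from $G$ and the hypotheses of Lemma~\ref{lem:10Borodin} are met.

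Next, let $G_1$ and $G_2$ denote the graphs obtained by identifying $v_1$ with $v_3$ and $v_2$ with $v_4$, respectively. I would split into two cases according to whether both of these identifications strictly increase the number of triangles. If they do, Lemma~\ref{lem:10Borodin} produces a triangle $v_iv_{i+1}z$ in $G$; since this triangle shares the edge $v_iv_{i+1}$ with $\alpha$, it is adjacent to $\alpha$, and conclusion $(a)$ holds.

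Otherwise, without loss of generality, the identification forming $G_1$ does not increase the triangle count. The key observation here is that, since $v_1v_3 \notin E(G)$, no triangle of $G$ contains both $v_1$ and $v_3$, so the identification cannot destroy any existing triangle of $G$; every triangle of $G$ survives (with $v_1$ replaced by the merged vertex). Hence the total number of triangles is unchanged, meaning that no new triangle is created in $G_1$. Thus $v_1$ and $v_3$ are a pair of opposite vertices of $\alpha$ that can be identified without creating any new triangles, and conclusion $(b)$ holds.

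The step requiring the most care is showing that the identification destroys no triangles, because only then does ``number of triangles does not increase'' coincide with ``no new triangles are created''. This is precisely where the ``at most one triangle'' hypothesis enters, through the diagonal-free observation established in the first step. Once that is pinned down, the remainder is a direct reading of Lemma~\ref{lem:10Borodin}.
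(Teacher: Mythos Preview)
Your proposal is correct and follows the same route as the paper, which states the corollary without an explicit proof and treats it as an immediate consequence of Lemma~\ref{lem:10Borodin}. Your write-up supplies the details the paper omits: that the ``at most one triangle'' hypothesis forces both diagonals to be absent (so Lemma~\ref{lem:10Borodin} applies) and also guarantees that no existing triangle is destroyed by the identification, so that ``number of triangles does not increase'' really means ``no new triangle is created.''
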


Theorem~\ref{thm:3triangles} settles $3$-colorability of planar graphs with at most three triangles.
The smallest example of a planar graph with four triangles that is not $3$-colorable is the complete graph $K_4$.
Plane $4$-critical graphs with exactly four triangles have been completely characterized by Borodin et al.~\cite{BorDvoKosLidYan14}.
In their proofs, they used the following result of Kostochka and Yancey~\cite{KosYan18},
which is a stronger version of Theorem~\ref{thm:4-critical}.

\begin{theorem}[\cite{KosYan18}]
	\label{thm:4-critical_ore}
	If $G$ is a $4$-critical graph, then 
	$$
		m_G \ge \frac{5n_G - 2}{3}\,.
	$$ 
	Moreover, the equality is achieved if and only if $G$ is a $4$-Ore graph.
\end{theorem}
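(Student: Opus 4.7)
The plan is to mimic Kostochka and Yancey's potential-function argument. Define $\rho(H) = 5\,n_H - 3\,m_H$ for any graph $H$; the claimed inequality $m_G \ge (5n_G-2)/3$ is equivalent to $\rho(G) \le 2$. I would choose a $4$-critical graph $G$ minimizing $n_G$ subject to $\rho(G) \ge 3$ and derive a contradiction. Immediate consequences of $4$-criticality (any $4$-critical graph has minimum degree at least $3$, and no proper induced subgraph is itself $4$-chromatic) will be used throughout.

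The core idea is to control $\rho$ on every non-trivial subgraph. For a proper subgraph $R \subsetneq G$ with $|V(R)| \ge 2$, $4$-criticality produces a proper $3$-coloring of $G - E(R)$, which, when restricted to $V(R)$, becomes a list-coloring problem on $G[V(R)]$ that \emph{must} be unsolvable (otherwise $G$ itself would be $3$-colorable). This unsolvability translates into a lower bound on $\rho(G[V(R)])$, so that the potential of $G$ cannot collapse along any subgraph. By choosing $R$ carefully --- a single vertex with its neighborhood, a short path, or a small subgraph clustered around a low-degree vertex --- one extracts a family of local structural restrictions ruling out configurations such as two adjacent $3$-vertices, certain short cycles through $3$-vertices, and small dense pieces. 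A global averaging argument (either discharging on vertices and edges, or summing the local potentials via telescoping) then forces $\rho(G) \le 2$, contradicting the choice of $G$.

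The hardest part, and what genuinely requires the sharper bound over Theorem~\ref{thm:4-critical}, is the equality characterization. Here I would assume $\rho(G) = 2$ and show that the extremal cases produced by the averaging step are rigid enough to force an Ore-decomposition: either $G = K_4$ (where $\rho(K_4) = 20 - 18 = 2$), or $G$ splits as $G_1 \oplus G_2$ for smaller $4$-critical graphs $G_1$ and $G_2$. A direct count gives $\rho(G_1 \oplus G_2) = \rho(G_1) + \rho(G_2) - 2$, so if $G$ attains equality then both factors must attain equality too, and induction on $n_G$ yields that $G$ is $4$-Ore. The main obstacle I would expect is exactly this last step: verifying that \emph{every} extremal configuration surfaced by the averaging really produces a genuine Ore-decomposition, rather than merely attaining equality by coincidence. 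This bookkeeping --- keeping track of how potential is lost when boundary vertices are identified and ensuring no extremal subgraph is overlooked --- is where the technical heart of the proof lies.
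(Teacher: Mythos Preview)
The paper does not prove this theorem at all: it is quoted verbatim from Kostochka and Yancey~\cite{KosYan18} and used as a black box (in Case~2 of the proof of Theorem~\ref{thm:triangle_plus_same_color_pair} and in Case~1 of the proof of Theorem~\ref{thm:3neighbors}). There is therefore nothing in the paper to compare your proposal against.

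For what it is worth, your sketch is a faithful high-level outline of the actual Kostochka--Yancey argument in the cited source: the potential function $\rho(H)=5n_H-3m_H$, the study of $\rho$ on proper subgraphs via colorings of quotient graphs, the discharging/averaging to force $\rho(G)\le 2$, and the inductive identification of the equality case with $4$-Ore graphs through the additivity $\rho(O(G_1,G_2))=\rho(G_1)+\rho(G_2)-2$. Your description of where the difficulty lies --- pinning down that every extremal configuration forces an Ore split --- is also accurate. But none of this is carried out here; if you were asked to supply a proof for this paper, the correct response is simply to cite~\cite{KosYan18}.
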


Here, a graph is {\em $k$-Ore} if it is obtained from a set of copies of $K_k$ by a sequence of DHGO-compositions,
where a {\em DHGO-composition} $O(G_1,G_2)$ of graphs $G_1$ and $G_2$ is the graph obtained through the sequence of the following steps:
delete some edge $xy$ from $G_1$, split a vertex $z$ of $G_2$ into non-isolated vertices $z_1$ and $z_2$,
and identify $x$ with $z_1$ and $y$ with $z_2$.

By a {\em $\Pl$-graph} we denote a planar graph with exactly four triangles and no $4$-faces.
A correlation between $4$-Ore graphs and $\Pl$-graphs was given in \cite{BorDvoKosLidYan14}.
\begin{theorem}[\cite{BorDvoKosLidYan14}]
	\label{thm:pl44f-graphs}
	Every $4$-Ore graph has at least four triangles. 
	Moreover, a $4$-Ore graph has exactly four triangles if and only if it is a $\Pl$-graph. 
\end{theorem}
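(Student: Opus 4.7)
The plan is to argue by induction on the number of vertices, or equivalently on the number of DHGO-compositions used to construct the 4-Ore graph. Write $t(G)$ for the number of triangles of $G$. The base case is $K_4$: it has $t(K_4) = 4$, and its planar embedding has only triangular faces, so $K_4$ is a $\Pl$-graph. For the inductive step, write $G = O(G_1, G_2)$, where $xy$ is the edge deleted from $G_1$ and $z$ is the vertex of $G_2$ split into $z_1, z_2$, with $z_1$ identified with $x$ and $z_2$ with $y$.

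The key structural observation is that no new triangles are created by the composition. Indeed, the edge-deleted copy of $G_1$ and the vertex-split copy of $G_2$ share only the vertices $x$ and $y$, and $xy \notin E(G)$, so every triangle of $G$ lies entirely in a modified copy of one of the two factors. Denoting by $a$ the number of triangles of $G_1$ through $xy$, and by $b$ the number of triangles of $G_2$ through $z$ whose two other endpoints end up on opposite sides of the split, one obtains
$$
    t(G) = t(G_1) + t(G_2) - a - b.
$$
To conclude $t(G) \ge 4$, it suffices to show $a + b \le t(G_1) + t(G_2) - 4$, which I would establish using the structural bound that in a 4-Ore graph each edge lies in at most two triangles (provable by induction from the $K_4$ base, since the composition never increases local triangle counts on edges inherited from the factors), a matching bound on how many triangles through $z$ a valid split can destroy, and the inductive hypothesis $t(G_i) \ge 4$.

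For the moreover part, equality $t(G) = 4$ forces $t(G_1) = t(G_2) = 4$ and the local bounds on $a, b$ to be simultaneously tight. By induction, each $G_i$ is then a $\Pl$-graph, and one verifies that a tight DHGO step on two $\Pl$-graphs admits a realization in a planar embedding with no $4$-face, by gluing the factors along compatible triangular faces incident with $xy$ and with $z$ (tightness of $a, b$ corresponds exactly to this face compatibility). Conversely, every $\Pl$-graph strictly larger than $K_4$ is shown to decompose as such a tight DHGO composition of two smaller $\Pl$-graphs; here the edge-count equality $m_G = \frac{5n_G - 2}{3}$ supplied by Theorem~\ref{thm:4-critical_ore}, together with Euler's formula and the absence of $4$-faces, forces enough rigidity to locate the required separating configuration (essentially a separating triangle along which to split).

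The hardest part will be the characterization direction: simultaneously verifying that tight DHGO steps preserve both planarity and the no-$4$-face property, and, conversely, that every $\Pl$-graph larger than $K_4$ admits such a tight decomposition. Both halves require a careful case analysis of how the four triangles can sit in a planar $4$-face-free embedding, and rely crucially on the fact that 4-Ore graphs are exactly the equality class in Theorem~\ref{thm:4-critical_ore}, so that the arithmetic of $n_G$, $m_G$, and the face-length sum leaves only the anticipated gluing patterns available.
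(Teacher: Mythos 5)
First, note that the paper does not prove Theorem~\ref{thm:pl44f-graphs} at all: it is imported verbatim from~\cite{BorDvoKosLidYan14}, so there is no in-paper argument to compare against, and your proposal has to stand on its own. Your inductive skeleton (induction on the number of DHGO-compositions, the identity $t(G)=t(G_1)+t(G_2)-a-b$, and the observation that no new triangles arise because the two factors meet only in the non-adjacent pair $x,y$) is the natural one and agrees with the strategy of the cited source. However, the step that actually carries the lower bound is missing. On the $G_1$ side, $a\le 2$ does follow from the claim that every edge of a $4$-Ore graph lies in at most two triangles, and that claim survives your induction. But on the $G_2$ side you need $b\le t(G_2)-2$, i.e.\ that \emph{every} admissible split of \emph{every} vertex $z$ leaves at least two triangles of $G_2$ intact, and this does not follow from the edge bound: a priori a vertex could lie on three triangles that pairwise share only $z$, and then the bipartition of $N(z)$ separating each of the three pairs destroys all three at once, leaving possibly only one triangle. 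Excluding this configuration (or proving the survival of two triangles directly) is the real content of the lower bound and requires its own strengthened induction hypothesis; asserting ``a matching bound on how many triangles through $z$ a valid split can destroy'' is exactly where the proof has a hole.

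Second, you have misread the quantification in the ``moreover'' part. Since a $\Pl$-graph is by definition a planar graph with exactly four triangles and no $4$-faces, the implication ``$\Pl$-graph $\Rightarrow$ exactly four triangles'' is vacuous; your ``conversely'' paragraph, which tries to show that every $\Pl$-graph larger than $K_4$ decomposes as a tight DHGO-composition, proves a different (and harder) statement than the one required. The entire content of the equivalence is the forward direction: a $4$-Ore graph with exactly four triangles must be \emph{planar} and must admit an embedding with no $4$-face. Planarity is not automatic for a DHGO-composition of planar factors glued along an arbitrary vertex split, and the no-$4$-face property is an embedding statement; your sketch compresses both into ``one verifies that a tight DHGO step admits a realization \dots by gluing the factors along compatible triangular faces'', which is precisely the case analysis that constitutes the proof in~\cite{BorDvoKosLidYan14} and cannot be taken for granted.
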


\section{Proofs of Theorems~\ref{thm:triangle_plus_precolored_pair}, \ref{thm:onetriangle_5face}, and~\ref{thm:3neighbors}}
	\label{sec:precoloring_two_verts}

We prove Theorem~\ref{thm:triangle_plus_precolored_pair} in two steps. 
First, we consider the case when the two precolored vertices receive distinct colors,
which is equivalent to the statement of Theorem~\ref{thm:triangle_plus_edge}.
\begin{theorem}
	\label{thm:triangle_plus_edge}
	Let $G$ be a planar graph with at most one triangle and let $H$ be a graph such that $G = H - e$ for some edge $e$ of $H$. 
	Then $H$ is $3$-colorable.
\end{theorem}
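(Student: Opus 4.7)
The plan is by minimum counterexample. Suppose the theorem fails and let $H$ minimize $n_H+m_H$ among planar graphs possessing an edge $e=uv$ such that $G:=H-e$ contains at most one triangle and $H$ is not $3$-colorable. Standard minimality arguments force $H$ to be $4$-critical, so $\delta(H)\ge 3$ and Theorem~\ref{thm:4-critical_ore} gives $m_H\ge (5n_H-2)/3$, with equality only when $H$ is $4$-Ore. Combining this with Euler's formula $n_H-m_H+f_H=2$ and the face-length bound $2m_H\ge 3f_{3,H}+4f_{4,H}+5(f_H-f_{3,H}-f_{4,H})$ produces $m_H\le (5n_H-10+2f_{3,H}+f_{4,H})/3$, and juxtaposing the two inequalities forces
$$
2f_{3,H}+f_{4,H}\ \ge\ 8.
$$
The problem therefore reduces to bounding the triangles and $4$-faces of $H$.

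The key reduction is an application of Corollary~\ref{cor:lem10} to every $4$-face $\alpha$ of $G$. Either $\alpha$ is adjacent to the (at most one) triangle of $G$, or one pair of opposite vertices of $\alpha$ can be identified without creating any new triangle. Whenever such a pair avoids $\{u,v\}$, the identification yields a smaller planar graph $G'$ with at most one triangle, and by the minimality of $H$ the graph $H':=G'+e$ admits a proper $3$-coloring; lifting this coloring to $H$ by assigning the merged vertices the common color produces a $3$-coloring of $H$, a contradiction. Consequently every $4$-face of $G$ either is adjacent to the triangle of $G$ (at most three such faces) or has $\{u,v\}$ as its only admissible opposite pair, a very restricted local configuration that forces $u$ and $v$ to share common neighbors and therefore creates triangles of $H$ through $e$. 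The $3$-faces of $H$ are then the triangle of $G$ (if any) together with one triangle per common neighbor of $u$ and $v$, and the $4$-faces of $H$ are either $4$-faces of $G$ (bounded above by the previous analysis) or the two pieces produced when $e$ splits the unique face of $G$ containing both $u$ and $v$. A careful accounting of these contributions should give $2f_{3,H}+f_{4,H}\le 7$, contradicting the inequality above. The equality case of Theorem~\ref{thm:4-critical_ore} should then be dispatched through Theorem~\ref{thm:pl44f-graphs}, since a planar $4$-Ore graph is a $\Pl$-graph with at least four triangles and no $4$-faces, a tight structure incompatible with $G$ possessing at most one triangle.

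The main obstacle will be the case analysis around $e$: the identifications produced by Corollary~\ref{cor:lem10} are blind to the distinguished edge, so one must track precisely when the only allowable opposite pair would collapse $u$ with $v$, and one must quantify how common neighbors of $u$ and $v$ simultaneously create new triangles of $H$, new short faces of $G$, and obstructions to the intended reduction. The few small base configurations that survive all reductions will most likely need to be handled by direct colorings.
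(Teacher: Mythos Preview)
Your plan has a genuine gap at its very first step: you apply Euler's formula to $H$, but nothing in the hypothesis guarantees that $H$ is planar. The theorem only asserts that $G=H-e$ is planar; the extra edge $e$ may well destroy planarity (take $G=K_5-e$). Consequently the quantity $f_H$ is undefined, the inequality $2f_{3,H}+f_{4,H}\ge 8$ is unavailable, and the whole ``face accounting in $H$'' that follows collapses. The paper avoids this entirely by computing in $G$: from the Handshaking Lemma and Euler's formula for $G$, together with $m_H=m_G+1$ and $n_H=n_G$, one gets that if $f_{4,G}\le 2$ then $m_H\le(5n_H-3)/3$, contradicting Theorem~\ref{thm:4-critical}. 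This is Case~1 of the paper, and it is the correct replacement for your counting step.

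Once one knows $f_{4,G}\ge 3$, your reduction via Corollary~\ref{cor:lem10} is on the right track but does much less than you suggest. It disposes of those $4$-faces of $G$ that are \emph{not} adjacent to the triangle $T$ and whose admissible identification is not the pair $\{u,v\}$; this is essentially the paper's Case~2. The real work lies in the $4$-faces that share an edge with $T$, or that contain both endpoints of $e$ as an opposite pair. For those, Corollary~\ref{cor:lem10} yields no usable identification, and a counting bound of the type you sketch (``at most three adjacent to $T$, plus a few forced triangles through $e$'') does not close: the configurations are not small and do not simply contribute bounded numbers to a face count. The paper instead uses the full strength of Lemma~\ref{lem:10Borodin} to locate two auxiliary $4$-cycles $C_1,C_2$ through the third vertex of $T$, and then argues that either one of them is a face (giving a fresh identification) or both are separating, in which case one side is triangle-free and its boundary precoloring extends by Theorem~\ref{thm:5face}, while the other side is handled by minimality. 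The position of the endpoints of $e$ relative to $C_1,C_2$ decides which side is which, and this is precisely the content of Cases~3--5. Your proposal defers this as ``a careful accounting'' and ``a few small base configurations'', but in fact it is the heart of the proof and cannot be replaced by a face count, even after the planarity issue is repaired.
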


\begin{proof}
	We prove the theorem by contradiction.
	Suppose that $H$ is a counterexample minimizing the number of vertices plus the number of edges 
	and let $G$ be a plane graph with at most one triangle such that $G = H - e$ for some edge $e$ of $H$.
	Note that since $G$ is planar and contains at most one triangle, it is $3$-colorable by Theorem~\ref{thm:3triangles}. 
	By Theorem~\ref{thm:precolor2verts}, we may assume that $G$ contains exactly one triangle $T$.
	Moreover, by the minimality, $H$ is $4$-critical.
	
	We consider five cases regarding $4$-faces in $G$.
	
	\medskip \noindent
	{\bf Case 1:} \textit{$G$ has at most two $4$-faces.} \quad
	By the Handshaking Lemma, we have
	$$
		2m_G = \sum_{\alpha \in F(G)} \ell(\alpha) \ge 3 + 4 \cdot f_{4,G} + 5\cdot(f_G - (1 + f_{4,G})) = 5f_G - 2 - f_{4,G}
	$$	
	(in the calculation, we assume that $T$ is a face, otherwise the lower bound on the number of edges would be even higher).
	Then, $5f_G \le 2m_G + 4$ and by applying the Euler's Formula and observing that $n_H = n_G$ and $m_H = m_G + 1$, 
	we infer that 
	$$
		10 = 5n_G - 5m_G + 5f_G \le 5n_G - 3m_G + 4 = 5n_H - 3(m_H - 1) + 4\,.
	$$
	Thus, 
	$$
		m_H \le \frac{5n_H - 3}{3}\,,
	$$ 
	a contradiction to Theorem~\ref{thm:4-critical}.

	\medskip \noindent
	{\bf Case 2:} \textit{$G$ has a $4$-face $\alpha = v_1v_2v_3v_4$ such that at most one vertex of $\alpha$ is incident with $T$ 
	and at most one vertex of $e$ is incident with $\alpha$.} \quad
	Let $G_i$ be the graph obtained from $G$ by identifying $v_i$ and $v_{i+2}$, where $i\in \{1,2\}$. 
	By the assumption and Corollary~\ref{cor:lem10}, 
	we may assume, without loss of generality, that $G_1$ contains $T$ as the unique triangle.
	Note that the graph $H_1$ obtained from $H$ by identifying $v_1$ and $v_3$ contains $e$ and is thus $3$-colorable by the minimality. 
	Thus, we can extend the coloring of $H_1$ to the coloring of $H$ in which $v_1$ and $v_3$ receive the same color, a contradiction.

	\medskip \noindent
	{\bf Case 3:} \textit{$G$ has a $4$-face $\alpha=v_1v_2v_3v_4$ such that at most one vertex of $\alpha$ is incident with $T$ 
	and both vertices of $e$ are incident with $\alpha$.} \quad
	We may assume, without loss of generality, that $e = v_1v_3$. 
	Let $G_2$ be the graph obtained from $G$ by identifying $v_2$ and $v_{4}$. 
	Note that if the number of triangles does not increase in $G_2$, then we can continue as in Case 2.
	
	Therefore, by Lemma~\ref{lem:10Borodin}, there exist vertices $x,z\in V(G)$ such that $xv_4,xz,zv_2\in E(G)$. 
	Consequently, no $4$-face of $G$, other than $\alpha$, can contain both vertices $v_1$ and $v_3$ due to planarity.  

	Due to Cases 1 and 2, and the fact that $\alpha$ contains both vertices of $e$, 
	there exists a $4$-face $\alpha'=v_1'v_2'v_3'v_4'$ such that $\alpha'$ contains two vertices of $T$, 
	say $v_1'$ and $v_2'$ (note that the two vertices incident with $T$ are not opposite in $\alpha'$, 
	otherwise there would be another triangle in $G$), 
	with $z'$ being the third vertex of $T$. 
	Let $G_i'$ be the graph obtained from $G$ by identifying $v_i'$ and $v_{i+2}'$, where $i\in \{1,2\}$. 
	Again, if the number of triangles does not increase in $G_1'$ or $G_2'$, then we can color $H$ with $3$-colors.
	
	It follows that there exist vertices $x',y'\in V(G)$ such that $x'z'$, $x'v_4'$, $y'z'$, and $y'v_3'\in E(G)$. 
	Suppose that at least one of $C_1 = z'v_2'v_3'y'$ or $C_2 = z'v_1'v_4'x'$ is a $4$-face, say $C_1$. 
	By our observation above, $C_1$ does not contain both vertices of $e$.
	Let $G'$ be the graph obtained from $G$ 
	by identifying $v_2'$ and $y'$.
	Note that the number of triangles in $G'$ does not increase. 	
	Let $H'$ be the graph obtained from $G'$ by adding the edge $e$. 
	By the minimality, we can color $H'$ with $3$ colors and extend the coloring to a coloring of $H$, 
	in which $y'$ and $v_2'$ receive the same color, a contradiction. 
	
	Thus, we may assume that both $C_1$ and $C_2$ are separating $4$-cycles.	
	Note that if the vertices of $\alpha$ (and thus also the endvertices of $e$) belong to the vertex set $V_1 = V(\exter(C_1)) \cup V(C_1)$, 
	then $H[V_1]$ contains both $T$ and $e$. 
	Therefore, we can color $H[V_1]$ by the minimality and extend the coloring of $C_1$ to a coloring of $H[V(\inter(C_1))\cup V(C_1)]$ by Theorem~\ref{thm:5face}. 
	We use an analogous argument for $C_2$ in the case when the vertices of $\alpha$ belong to the graph induced by the vertex set $V(\inter(C_1)) \cup V(C_1)$,
	which implies that the vertices of $\alpha$ belong to the vertex set $V(\exter(C_2)) \cup V(C_2)$.
	Thus, $H$ is $3$-colorable, a contradiction.

	\medskip \noindent
	{\bf Case 4:} \textit{$G$ has a $4$-face $\alpha=v_1v_2v_3v_4$ such that exactly two of its vertices, say $v_1$ and $v_2$, are incident with $T$,
	and at most one vertex of the edge $e$ is incident with $\alpha$.} \quad
	Let $z$ be the third vertex of $T$.
	Using similar arguments as in the previous cases, 
	we infer that there exist vertices $x,y\in V(G)$ such that $xz$, $xv_4$, $yz$, and $yv_3\in E(G)$.
	
	Suppose that $C_1=zv_2v_3y$ is a $4$-face. 
	If $e \neq v_2y$, then consider the graph $G'$ obtained from $G$ 
	by identifying $v_2$ and $y$.
	Note that the number of triangles in $G'$ does not increase.
	Let $H'$ be the graph obtained from $G'$ by adding the edge $e$. 	 
	By the minimality, we can color $H'$ with $3$-colors and extend the coloring to a coloring of $H$, 
	in which $y$ and $v_2$ receive the same color, a contradiction.	
	Therefore, $e = v_2y$. 
	But then, either $C_2=zv_1v_4x$ is a $4$-face, 
	in which case we can apply the same procedure on $v_1$ and $x$ as we did on $v_2$ and $y$,  
	or $C_2$ is a separating $4$-cycle. 
	However, since both $V(T)$ and $V(e)$ belong to the vertex set $V_1 = V(\exter(C_2)) \cup V(C_2)$, 
	we can complete the coloring in a similar manner as in the last paragraph of Case 3, a contradiction. 
	
	Thus, by symmetry, both $C_1$ and $C_2$ are separating $4$-cycles. 
	Moreover, each of $C_1$ and $C_2$ contains exactly one vertex of $e$ in its interior.
	Furthermore, $T$ is a $3$-face, otherwise we can color $H[V(\exter(T)) \cup V(T)]$ by the minimality, 
	and then extend the coloring to the interior of $T$ by Theorem~\ref{thm:5face}.
	Additionally, due to Case 1, 
	there exists a $4$-face $\alpha'=v_1'v_2'v_3'v_4'$ in $G$, distinct from $\alpha$.
	If identifying either $v_1'$ and $v_3'$, or $v_2'$ and $v_4'$ results in a graph with one triangle, namely $T$,
	then by the minimality, it is $3$-colorable and the coloring can be extended to $H$.
	Therefore, by the fact that $G$ has only one triangle and Lemma~\ref{lem:10Borodin},
	two vertices of $\alpha'$ are incident with $T$, say $v_1' = v_1$ and $v_2' = z$ 
	(meaning that at least one of $v_3'$ and $v_4'$ is in $V(\inter(C_2))$, see Figure~\ref{fig:case4}) 
	and there are vertices $x'$ and $y'$ in $G$ such that $x'v_4'$, $x'v_2$, $y'v_3'$, and $y'v_2 \in E(G)$.
	This is not possible due to the planarity of $G$, a contradiction.
	\begin{figure}[htb!]
		$$
			\includegraphics{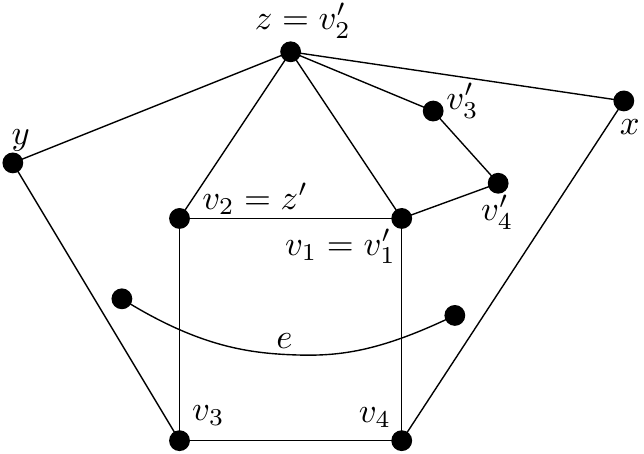}
		$$
		\caption{The $4$-faces $\alpha$ and $\alpha'$ in the last part of Case 4.}
		\label{fig:case4}
	\end{figure}

	\medskip \noindent
	{\bf Case 5:} \textit{$G$ has at least three $4$-faces and each of them is incident with two vertices of $T$ and both vertices of $e$.} \quad	
	Let $\alpha=v_1v_2v_3v_4$ be such a face and let $T=v_1v_2z$. 
	Without loss of generality, we may assume that $e = v_1v_3$. 
	Let $G_2$ be the graph obtained from $G$ by identifying $v_2$ and $v_{4}$. 
	If the number of triangles does not increase in $G_2$, then we are done. 
	Thus, by Lemma~\ref{lem:10Borodin}, there exists a vertex $x \in V(G)$ such that $xz$, $xv_4\in E(G)$. 
	Note that by the assumptions, $C=zv_1v_4x$ is not a $4$-face, since it is incident to exactly one vertex of $e$. 
	Therefore, $C$ is a separating $4$-cycle. 
	But then, the vertices of both $T$ and $e$ are contained in the vertex set $V_1 = V(\exter(C)) \cup V(C)$. 
	Let $V_2 = V(\inter(C)) \cup V(C)$. 
	By the minimality, we can color $G[V_1]$ and extend the coloring of $C$ to the coloring of $G[V_2]$ by Theorem~\ref{thm:5face}, a contradiction. 
	
	\medskip
	Since no $4$-face can be incident with all three vertices of $T$, the proof is completed.
\end{proof}

In the second step of proving Theorem~\ref{thm:triangle_plus_precolored_pair},
we show that any two non-adjacent vertices in a planar graph with one triangle can be colored with the same color.
\begin{theorem}
	\label{thm:triangle_plus_same_color_pair}
	Let $G$ be a planar graph with at most one triangle.
	Then each coloring of any two non-adjacent vertices with the same color can be extended to a $3$-coloring of $G$.
\end{theorem}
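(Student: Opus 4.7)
I would follow the scheme of the proof of Theorem~\ref{thm:triangle_plus_edge}, applied to a minimum counterexample. Take $G$ to be a counterexample minimizing $n_G+m_G$, with $u$ and $v$ non-adjacent and precolored with a common color. By Theorem~\ref{thm:precolor2verts}, $G$ contains exactly one triangle $T$, and the usual minimality argument forces a critical-type structure analogous to the one used in the proof of Theorem~\ref{thm:triangle_plus_edge}.

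The approach is a case analysis on the $4$-faces of $G$, relying on Corollary~\ref{cor:lem10} and Lemma~\ref{lem:10Borodin}. Whenever a $4$-face $\alpha = v_1v_2v_3v_4$ admits an identification of opposite vertices, say $v_1$ and $v_3$, without creating new triangles---and provided this identification neither merges $u$ with $v$ nor is otherwise incompatible with the precoloring---the resulting graph $G^\star$ is a smaller planar graph with at most one triangle and the same precoloring of $u$ and $v$. By minimality it admits the desired $3$-coloring, which lifts to a $3$-coloring of $G$ in which $v_1$ and $v_3$ agree, a contradiction. The analogue of Case~1 of Theorem~\ref{thm:triangle_plus_edge} goes through via the same Handshaking/Euler computation and Theorem~\ref{thm:4-critical}: at most two $4$-faces in $G$ force $m_G \le (5n_G-2)/3$, contradicting $4$-criticality.

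The non-trivial cases are those in which the good identification is blocked, mirroring Cases~2--5 of Theorem~\ref{thm:triangle_plus_edge}. There Lemma~\ref{lem:10Borodin} supplies a rigid configuration of paths around $T$ that, combined with the position of $u$ and $v$, yields a short separating cycle $C$ of length at most four enclosing $T$ and both precolored vertices on one side. Minimality $3$-colors the inside of $C$; Theorem~\ref{thm:onetriangle_5face} then extends the coloring of $C$---which bounds a face of the outside---to a $3$-coloring of the outside, producing the desired extension of $G$. When $u$ and $v$ fall on opposite sides of $C$, the triangle-free Theorem~\ref{thm:precolor2verts} (applied to the appropriate triangle-free side after fixing suitable colors on $C$) finishes the argument.

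The main obstacle I anticipate is the bookkeeping when $u$ or $v$ itself lies on one of the $4$-faces being analyzed: identifying opposite vertices of such a face might merge $u$ with a non-precolored vertex (harmless after color propagation) or with $v$ itself (which requires a dedicated subcase, resolved by choosing the other pair of opposite vertices or by shortening $G$ along a different route). Resolving these subcases, possibly by appealing to Theorem~\ref{thm:triangle_plus_edge} on a subinstance when two critical vertices must end up in distinct color classes, will be the most delicate part; but it should all fit within the same Cases~2--5 framework, with the role played by the edge $e$ in Theorem~\ref{thm:triangle_plus_edge} being played by the precolored pair $\{u,v\}$ here.
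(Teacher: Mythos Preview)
Your plan has a genuine gap in the counting step. In the proof of Theorem~\ref{thm:triangle_plus_edge} the graph $H=G+e$ satisfies $n_H=n_G$ and $m_H=m_G+1$, so the Handshaking/Euler computation with at most two $4$-faces yields $m_H\le(5n_H-3)/3$, contradicting Theorem~\ref{thm:4-critical}. Here the analogue is \emph{not} $G$ itself (which is $3$-colorable by Theorem~\ref{thm:3triangles}, hence never $4$-critical) but the graph $H$ obtained by \emph{identifying} $u$ and $v$; only $H$ is $4$-critical by minimality. Now $n_H=n_G-1$ and $m_H=m_G$, so the arithmetic shifts by one unit in the wrong direction: ``at most two $4$-faces'' gives only $m_H\le(5n_H-1)/3$, which does not contradict Theorem~\ref{thm:4-critical}. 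The paper's proof therefore splits differently: no $4$-face (clean contradiction), exactly one $4$-face (borderline $m_H=(5n_H-2)/3$), and at least two $4$-faces. The one-$4$-face case is not handled by your framework at all: it requires Theorems~\ref{thm:4-critical_ore} and~\ref{thm:pl44f-graphs} to conclude that $H$ is $4$-Ore with at least five triangles, and then a separate argument bounding how many triangles the identification of $u$ and $v$ can create inside the Lemma~\ref{lem:10Borodin} configuration. This $4$-Ore step is the key extra ingredient your plan is missing.

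A second issue: you invoke Theorem~\ref{thm:onetriangle_5face} to extend across a short separating cycle, but in the paper Theorem~\ref{thm:onetriangle_5face} is proved \emph{using} Theorem~\ref{thm:triangle_plus_precolored_pair}, of which the present statement is half. That appeal is circular. The paper avoids this by arranging that the region into which one extends is always triangle-free, so that the older Theorem~\ref{thm:5face} suffices; you would need to argue the same.
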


\begin{proof}
	We prove the theorem by contradiction.
	Suppose that a counterexample $G$ is a plane graph with the minimum number of vertices.
	By Theorem~\ref{thm:precolor2verts}, we may also assume that $G$ contains exactly one triangle $T$.
	Let $u$ and $v$ be two non-adjacent vertices of $G$.

	Let $H$ be the graph obtained from $G$ by identifying the vertices $u$ and $v$. 	 
	Clearly, $n_G = n_H + 1$ and $m_G = m_H$. 
	By the minimality, $H$ is $4$-critical.
	To reach a contradiction, we only need to prove that $H$ is $3$-colorable, 
	which implies that there exists a $3$-coloring of $G$ in which $u$ and $v$ receive the same color.
	
	We consider three cases regarding $4$-faces in $G$.

	\medskip \noindent
	{\bf Case 1:} \textit{$G$ has no $4$-faces.} \quad 
	By the Handshaking Lemma, we have
	$$
		2m_G = \sum_{\alpha \in F(G)} \ell(\alpha) \ge 3 + 5 \cdot (f_G - 1) = 5f_G - 2\,.
	$$	
	Then, $5f_G \le 2m_G + 2$ and by applying the Euler's Formula, we infer that 
	$$
		10 = 5n_G - 5m_G + 5f_G \le 5n_G - 3m_G + 2 = 5n_H + 5 - 3m_H + 2\,.
	$$
	Thus, 
	$$
		m_H \le \frac{5n_H - 3}{3}\,,
	$$ 
	a contradiction to Theorem~\ref{thm:4-critical}.	

	\medskip \noindent
	{\bf Case 2:} \textit{$G$ has exactly one $4$-face.} \quad
	Similarly as in Case 1, we can compute that $5f_G \le 2m_G + 3$ 
	and by applying Euler's Formula, we infer that
	$$
		m_H \le \frac{5n_H - 2}{3}\,.
	$$ 
	In the case when $m_H < \frac{5n_H - 2}{3}$, we obtain a contradiction to Theorem~\ref{thm:4-critical},  
	and therefore, $H$ has exactly $\frac{5n_H - 2}{3}$ edges. 
	
	Let $\alpha=v_1v_2v_3v_4$ be the $4$-face in $G$ and 
	let $G_i$ be the graph obtained from $G$ by identifying $v_i$ and $v_{i+2}$, where $i \in \{1,2\}$. 

	Suppose first that the number of triangles does not increase in $G_1$ or $G_2$, say $G_1$.
	In the case $\{u,v\} \neq \{v_1,v_3\}$, we identify $v_1$ and $v_3$ in $H$ to obtain the graph $H_1$.
	By the minimality, we can color $H_1$ with $3$ colors and extend the coloring to a coloring of $H$, 
	and therefore also to $G$, a contradiction.
	Hence, we may assume that $\{u,v\} = \{v_1,v_3\}$. 
	In this case, $H$ is a planar graph with exactly one triangle. 
	Thus, by Theorem~\ref{thm:3triangles}, there exists a $3$-coloring of $H$, and therefore also of $G$, a contradiction.

	We may thus assume that the number of triangles increases in both $G_1$ and $G_2$. 
	By Lemma~\ref{lem:10Borodin}, without loss of generality, we may assume that there exist vertices $x,y,z \in V(G)$ such that 
	$zv_1$, $zv_2$, $xz$, $xv_4$, $yz$, and $yv_3 \in E(G)$, where $zv_1v_2$ is $T$. 	
	Since $G$ contains exactly one $4$-face, it follows that both $C_1 = zv_1v_4x$ and $C_2 = zv_2v_3y$ are separating $4$-cycles. 
	
	Note that if both $u$ and $v$ belong to the subgraph of $G$ induced by the vertex set $V_1 = V(\exter(C_1)) \cup V(C_1)$, 
	then we can color $G[V_1]$ by the minimality and use Theorem~\ref{thm:5face} to extend the coloring of $C_1$ to the coloring of the interior of $C_1$. 	
	By symmetry, we may thus assume, without loss of generality, that $u \in V(\inter(C_1))$ and $v \in V(\inter(C_2))$. 
	
	Since $m_H = \frac{5n-2}{3}$, by Theorems~\ref{thm:4-critical_ore} and~\ref{thm:pl44f-graphs},
	we infer that $H$ must have at least $5$ triangles. 	
	Therefore, since $G$ has exactly one triangle, it follows that by identifying $u$ and $v$, we create at least four new triangles. 
	We will prove that this cannot happen. 
	
	First, observe that no new triangle can contain vertices $x$ or $y$, 
	since that would imply the existence of another triangle, distinct from $T$, in $G$. 
	Next, observe that $u$ is adjacent with at most one of the vertices $v_1$ and $v_4$, 
	and $v$ is adjacent with at most one of the vertices $v_2$ and $v_3$. 
	Thus, at most one new triangle can be formed using the edges $v_1v_2$ or $v_3v_4$, 
	and so there must exist at least three triangles in $H$ which contain the vertex $z$ and either $u$ or $v$, say $u$, is adjacent to $z$.
	Therefore, there exist at least three vertices $w_1,w_2,w_3 \in V(G)$ such that $w_1,w_2,w_3 \in V(\inter(C_2))$.
	Moreover, each of them is adjacent to $z$ and $v$ (see Figure~\ref{fig:caseB2}). 
	\begin{figure}[htb!]
		$$
			\includegraphics{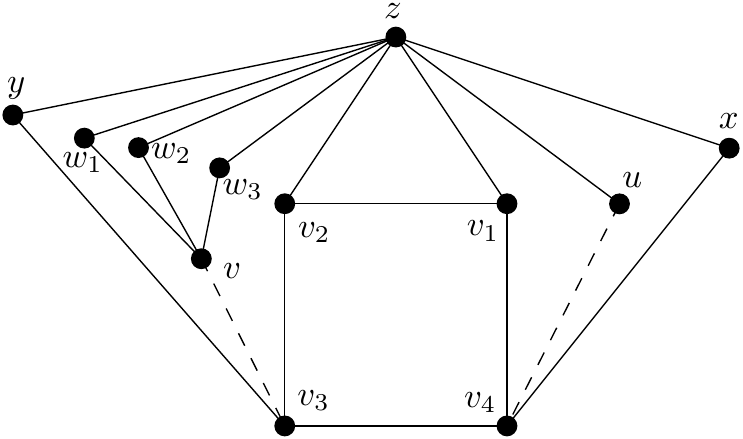}
		$$
		\caption{The vertices in $G$ comprising triangles in $H$ in the last part of Case 2.}
		\label{fig:caseB2}
	\end{figure}	
	Consider now the $4$-cycle $C=zw_1vw_2$. 
	Since $G$ contains exactly one $4$-face, it follows that $C$ is a separating $4$-cycle. 
	Furthermore, the exterior of $C$ together with the vertices of $C$ contains both $u$ and $v$, as well as $T$. 
	Thus, by the minimality, we can color $G[V(\exter(C)) \cup V(C)]$
	and extend the $3$-coloring of the vertices of $C$ to a $3$-coloring of $H$ by Theorem~\ref{thm:5face}.

	\medskip \noindent
	{\bf Case 3:} \textit{$G$ has at least two $4$-faces.} \quad 
	Let $\alpha=v_1v_2v_3v_4$ be a $4$-face 
	and let $G_i$ be the graph obtained from $G$ by identifying $v_i$ and $v_{i+2}$, where $i \in \{1,2\}$. 
	Since $G$ contains exactly one triangle, by Corollary~\ref{cor:lem10}, 
	either, without loss of generality, $v_1v_2$ is an edge of $T$ 
	or we can identify $v_1$ and $v_3$ or $v_2$ and $v_4$ without creating any new triangles. 
	Suppose first that $v_1v_2$ is not an edge of $T$; say that $G_1$ has at most one triangle. 
	Then, in the case $\{u,v\} \neq \{v_1,v_3\}$, we identify $v_1$ and $v_3$ in $H$ to obtain the graph $H_1$.
	By the minimality, we can color $H_1$ with $3$ colors and extend the coloring to a coloring of $H$, 
	and therefore also to $G$, a contradiction.
	Hence, we may assume that $\{u,v\} = \{v_1,v_3\}$. 
	In this case, $H$ is a planar graph with exactly one triangle. 
	Thus, by Theorem~\ref{thm:3triangles}, there exists a $3$-coloring of $H$, and therefore also of $G$, a contradiction.
	
	Thus, we may assume that $T = v_1v_2z$, with $z$ being distinct from $v_3$ and $v_4$, 
	and that in both $G_1$ and $G_2$ the number of triangles is at least $2$. 
	Therefore, by Lemma~\ref{lem:10Borodin}, there exist vertices $x,y\in V(G)$ such that $xz$, $xv_4$, $yz$, and $yv_3 \in E(G)$.

	Suppose that $C_1=zv_1v_4x$ is a $4$-face. 
	Then, due to planarity of $G$, 
	in the graph $G'$ obtained by identifying $v_1$ and $z$, no new triangle is created.
	Thus, by the minimality, we can color $G'$ and infer $3$-colorability of $G$ in a similar manner as above, a contradiction.

	Therefore, by symmetry, we may assume that both $C_1$ and $C_2=zv_2v_3y$ are separating $4$-cycles.
	Note that if both $u$ and $v$ belong to the vertex set $V_1 = V(\exter(C_1))\cup V(C_1)$ 
	(resp., $V_2 = V(\exter(C_2)) \cup V(C_2)$), 
	then, by the minimality, we can color the graph $H_1$ (resp., $H_2$) 
	obtained from $G[V_1]$ (resp., $G[V_2]$) by identifying $u$ and $v$ 
	and extend the coloring to a coloring of $H$ by Theorem~\ref{thm:5face}, hence also obtaining a $3$-colorability of $G$. 
	
	Thus, we may assume, without loss of generality, that $u \in \inter(C_1)$ and $v \in \inter(C_2)$. 
	Now, consider a $4$-face $\alpha'=v_1'v_2'v_3'v_4'$. 
	If $\alpha'$ satisfies the property $(b)$ of Corollary~\ref{cor:lem10}, then we proceed as above to obtain a contradiction.
	Therefore, $\alpha'$ is incident with $T$ and, by planarity of $G$,
	the vertices of $\alpha'$ are all contained in $V(\inter(T)) \cup V(T)$.
	But then, both $u$ and $v$ belong to the exterior of $T$ and 
	we can color, by the minimality, the graph obtained from $G[V(\exter(T) \cup V(T))]$ by identifying $u$ and $v$.
	Finally, we extend the obtained coloring to a coloring of $H$ by Theorem~\ref{thm:5face}. 
	Hence, from the coloring of $H$, we again obtain $3$-colorability of $G$, a contradiction.	
	This completes the proof.
\end{proof}

Theorems~\ref{thm:triangle_plus_edge} and~\ref{thm:triangle_plus_same_color_pair} combined settle Theorem~\ref{thm:triangle_plus_precolored_pair}.
Next, we prove Theorem~\ref{thm:onetriangle_5face}.
\begin{proof}[Proof of Theorem~\ref{thm:onetriangle_5face}]
	Let $G$ be a planar graph with at most one triangle and let $f$ be a precolored face of length at most $4$.
		
	Suppose first that $f$ is of length $3$.
	Since there is only one coloring of $f$ (up to a permutation of colors), the result follows from Theorem~\ref{thm:3triangles}.
	
	Thus, we may assume that $f = v_1v_2v_3v_4$ is a $4$-face.	
	Suppose that the precoloring of $f$ uses all three colors.
	Then, two non-adjacent vertices of $f$, say $v_1$ and $v_3$, receive distinct colors
	and the other two vertices are colored with the third.
	Note that the same coloring of $f$ (up to a permutation of colors) can be obtained by adding an edge between $v_1$ and $v_3$.
	The obtained graph is $3$-colorable by Theorem~\ref{thm:triangle_plus_edge}.
	
	Therefore, we may assume that the vertices of $f$ are precolored with two colors.
	We proceed by contradiction. 
	Let $G$ be a plane graph with at most one triangle such that a precoloring of some $4$-face $f$ with two colors cannot be extended to a $3$-coloring of $G$.
	Moreover, let $G$ be the smallest such graph in terms of the vertices.
	Clearly, $G$ has exactly one triangle $T$, otherwise the precoloring can be extended by Theorem~\ref{thm:5face}.
	
	Let $G_i$ be the graph obtained from $G$ by identifying $v_i$ and $v_{i+2}$, where $i \in \{1,2\}$. 	
	If the number of triangles does not increase in $G_1$ or $G_2$, say $G_1$,
	then there is a $3$-coloring of $G_1$, guaranteed by Theorem~\ref{thm:triangle_plus_precolored_pair}, 
	which induces a $3$-coloring of $G$ such that the vertices of $f$ are colored with two colors.
	
	Thus, by Lemma~\ref{lem:10Borodin}, without loss of generality, 
	we may assume that there exist vertices $x,y,z \in V(G)$ such that 
	$zv_1$, $zv_2$, $xz$, $xv_4$, $yz$, and $yv_3 \in E(G)$, where $T=zv_1v_2$.
	Observe that coloring of $f$ forces also the colors on $x$, $y$, and $z$ (see Figure~\ref{fig:4ext}).
	\begin{figure}[htb!]
		$$
			\includegraphics{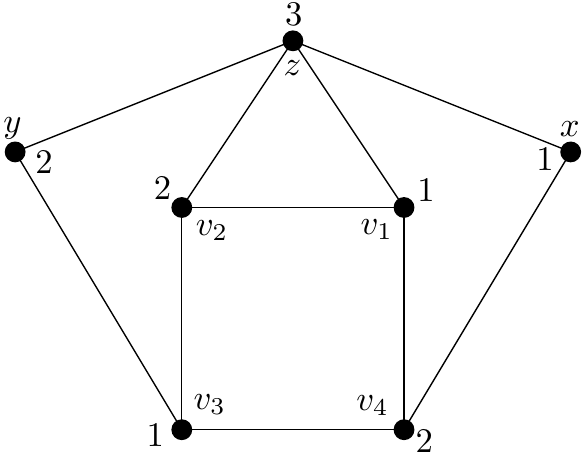}
		$$
		\caption{The coloring of $f$ forces the colors of $x$, $y$, and $z$.}
		\label{fig:4ext}
	\end{figure}
	
	Let $C_1=zv_1v_4x$ and $C_2=zv_2v_3y$.
	Suppose that at least one of $C_1$ or $C_2$, say $C_1$, is a separating $4$-cycle.
	Then, by the minimality, the coloring of $f$ extends to a $3$-coloring of $G[V(\exter(C_1)) \cup V(C_1)]$.
	Since the obtained coloring of $C_1$ extends to a $3$-coloring of $G[V(\inter(C_1)) \cup V(C_1)]$ by Theorem~\ref{thm:5face},
	we obtain a $3$-coloring of $G$, a contradiction.
	
	Thus, we may assume that both $C_1$ and $C_2$ are $4$-faces in $G$. 
	In a similar manner as above, we infer that $T$ must be a $3$-face.
	But then, the precoloring of the $5$-cycle $C_3=v_3v_4xzy$ given in Figure~\ref{fig:4ext} 
	extends to a $3$-coloring of $G[V(\exter(C_3) \cup V(C_3))]$ (which might as well be an empty graph) 
	by Theorem~\ref{thm:5face}
	and we color the two vertices in the interior of $C_3$ as in Figure~\ref{fig:4ext},
	hence obtaining a $3$-coloring of $G$, a contradiction.
	This completes the proof.
\end{proof}

We conclude this section with a proof of Theorem~\ref{thm:3neighbors}.
\begin{proof}[Proof of Theorem~\ref{thm:3neighbors}]
	We prove the theorem by contradiction.
	Let $G$ be a minimal counterexample to the theorem, i.e.,
	$G$ is a $K_4'$-free planar graph with at most one triangle and the minimum number of vertices
	such that there is a vertex $u$ of degree at most $3$ with an independent neighborhood,
	such that precoloring the vertices in $N(u)$ with a same color does not extend to a $3$-coloring of $G$.

	First, observe that by Theorem~\ref{thm:4vert}, $G$ has exactly one triangle $T$,
	and by Theorem~\ref{thm:triangle_plus_precolored_pair}, $u$ is a $3$-vertex.
	Let $N[u] = \{u,u_1,u_2,u_3\}$ 
	and let $H$ be the graph obtained by identifying $N[u]$ into a vertex $w$.
	Let $\alpha_1$, $\alpha_2$, and $\alpha_3$ be the three faces incident to $u$ in $G$ 
	that contain respectively $\{u_1,u_2\}$, $\{u_2,u_3\}$, and $\{u_1,u_3\}$. 
	Furthermore, let $\alpha_1'$, $\alpha_2'$, and $\alpha_3'$ be the faces incident to $w$ in $H$ 
	corresponding to $\alpha_1$, $\alpha_2$, and $\alpha_3$.
	
	Clearly, every $3$-coloring of $H$ induces a $3$-coloring of $G$ with $u_1$, $u_2$, and $u_3$ colored with a same color, 
	while $u$ can be colored with either of the remaining two colors.
	Additionally, since $G$ is a planar graph, $H$ is also a planar graph 
	and by the minimality of $G$, $H$ is $4$-critical.
	Observe also that $n_G = n_H + 3$, $m_G = m_H + 3$, and $f_G = f_H$.

	Now, we prove two structural properties of $H$.
	\begin{claim}
		\label{claim:no_separating_C3}
		$H$ has no separating triangles.
	\end{claim}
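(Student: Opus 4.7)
The plan is to argue by contradiction using only the $4$-criticality of $H$ (established just before the claim), together with the standard observation that a $4$-critical graph admits no separating clique.

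I would start by supposing $H$ contains a separating triangle $C = c_1 c_2 c_3$ and setting $H_1 = H[V(\inter(C)) \cup V(C)]$ and $H_2 = H[V(\exter(C)) \cup V(C)]$. The first step is to show that both are $3$-colorable. Since $C$ is separating, both $\inter(C)$ and $\exter(C)$ contain at least one vertex of $H$; picking any $v \in V(\exter(C))$ yields $H_1 \subseteq H - v$, and $4$-criticality of $H$ gives $\chi(H-v) \le 3$, hence $\chi(H_1) \le 3$. The symmetric argument with a vertex of $\inter(C)$ in place of $v$ gives $\chi(H_2) \le 3$.

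The second step is to glue. Any proper $3$-coloring of $H_1$ or of $H_2$ must assign three distinct colors to $c_1, c_2, c_3$, so after permuting the colors of a $3$-coloring of $H_2$ I may assume that the two colorings agree on $V(C)$. Because $C$ is a separating cycle of the plane embedding of $H$, no edge of $H$ joins $V(\inter(C))$ and $V(\exter(C))$, so every edge of $H$ lies in $H_1$ or in $H_2$; the glued map is therefore a proper $3$-coloring of $H$, contradicting $\chi(H) = 4$. I do not anticipate any real obstacle here, as the claim is essentially a general property of $4$-critical planar graphs and does not exploit the specific structure of $G$ or of $u$; the only point worth keeping in mind is that ``separating'' is meant with respect to the plane embedding of $H$, so that no edge of $H$ crosses $C$, which is precisely the definition recalled in Section~\ref{sec:prel}.
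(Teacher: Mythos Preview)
Your proof is correct and is considerably shorter than the paper's. You exploit a fact the paper establishes immediately before the claim---that $H$ is $4$-critical---and then invoke the standard observation that a $4$-critical plane graph can have no separating triangle: both $H_1$ and $H_2$ are proper subgraphs, hence $3$-colorable, and any two $3$-colorings of a triangle differ only by a permutation, so the colorings glue. Your remark that ``separating'' is in the plane-embedding sense (so no edge of $H$ joins $\inter(C)$ to $\exter(C)$) is exactly what is needed for the gluing step, and it matches the definition given in Section~\ref{sec:prel}.

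The paper takes a quite different and more laborious route. It never appeals to the $4$-criticality of $H$ for this claim; instead it distinguishes whether the separating triangle $C$ equals the original triangle $T$ of $G$ or is a new triangle created by the identification (so $w\in V(C)$), and in the latter case it runs an induction on the number of separating triangles inside $C$, repeatedly invoking the minimality of $G$ together with \Cref{thm:3triangles} to colour pieces with at most three triangles. What their approach buys is that it only uses the minimality of $G$ directly (rather than the derived criticality of $H$), so it would survive even if one were uneasy about the blanket assertion that $H$ is $4$-critical. What your approach buys is brevity and generality: it is the textbook ``no separating clique in a critical graph'' argument, and it needs nothing about the specific construction of $H$ from $G$ beyond the already-stated vertex-criticality (which is all you use).
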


	\begin{proofclaim}
		Suppose the contrary and let $C$ be a separating triangle in $H$. 
		First, suppose that $C$ is the triangle of $G$.
		Without loss of generality, we may assume that $w \in V(\inter(C))$.
		By the minimality, there is a $3$-coloring of $H[V(\inter(C)) \cup V(C)]$,
		and by Theorem~\ref{thm:3triangles}, we can extend it to a $3$-coloring of $H$, 
		since $H[V(\exter(C)) \cup V(C)]$ has exactly one triangle, a contradiction.

		Therefore, we may assume that $C \neq T$.	
		In that case, $C$ has been created from a $5$-cycle $C_G$ after we identified $N[u]$ into $w$ and thus $w \in V(C)$. 		 
		Since $C \neq T$, we may assume, without loss of generality,
		that $H[V(\inter(C)) \cup V(C)]$ contains $\alpha_1'$ but not $\alpha_2'$ or $\alpha_3'$ (see Figure~\ref{fig:thm11_sep3}). 	 
		\begin{figure}[htb!]
			$$
				\includegraphics{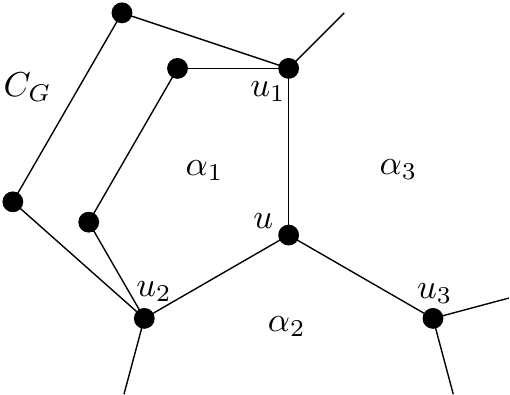}
			$$
			\caption{A separating $5$-cycle in $G$ containing $\alpha_1$.}
			\label{fig:thm11_sep3}
		\end{figure}	

		By the minimality, there is a $3$-coloring $\phi$ of $H[V(\exter(C)) \cup V(C)]$.
		Now, we show that we can extend $\phi$ to the interior of $C$. 
		Let $H_1 = H[V(\inter(C)) \cup V(C)]$.
		We proceed by induction on the number of separating triangles in $H_1$.
		First, recall that all separating triangles in $H_1$ are incident to $w$; 
		more precisely, they were obtained from $5$-cycles in $G$ containing $\{u_1,x,u_2\}$.

		Suppose that $H_1 = H[V(\inter(C)) \cup V(C)]$ has no separating triangle. 
		Then it has at most three triangles: $C$ as its outer face, possibly $\alpha_1'$, and possibly $T$. 
		Therefore, $H_1$ is a planar graph with at most three triangles and thus $3$-colorable by \Cref{thm:3triangles}.

		So, we may assume that $H_1$ has at least one separating triangle; 
		we select a separating triangle $C'$ such that all separating triangles in $H_1$ are contained in $H_1' = H_1[V(\inter(C')) \cup V(C')]$.
		Then, by induction, there is a $3$-coloring $\phi'$ of $H_1'$.
		Finally, using the colorings $\phi$ and $\phi'$, 
		we can complete the coloring of $H$ by coloring $H[V(H_1) \setminus V(\inter(C'))]$
		using \Cref{thm:3triangles} and an eventual permutation of colors in $\phi'$, 
		a contradiction. 
	\end{proofclaim}

	\begin{claim}
		\label{claim:no_separating_C4}
		If $H$ has a separating $4$-cycle, then both its interior and exterior must contain $w$ or a triangle.
	\end{claim}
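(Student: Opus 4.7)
My plan is to proceed by contradiction. Suppose $C = c_1c_2c_3c_4$ is a separating $4$-cycle of $H$ such that, without loss of generality, the closed interior side $H_1 := H[V(\inter(C)) \cup V(C)]$ contains neither $w$ nor any triangle of $H$; the symmetric case is handled by swapping the roles of interior and exterior. The goal is to produce a proper $3$-coloring of $H$, which then lifts to a $3$-coloring of $G$ in which $u_1$, $u_2$, $u_3$ all receive the color of $w$ (and $u$ takes one of the remaining two colors), contradicting the choice of $G$.

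The first key step is to observe that $C$ bounds a face of $H_1$ in the inherited plane embedding. Since $H_1$ is triangle-free by assumption, it cannot contain a chord $c_ic_{i+2}$: such a chord would form a triangle with two consecutive edges of $C$, and every vertex of that triangle lies in $V(C) \subseteq V(H_1)$. Hence the unbounded region of $H_1$ is bounded exactly by $C$, making $C$ a $4$-face of $H_1$.

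The second key step is to obtain a $3$-coloring of $C$ from the opposite side. Because $C$ is separating, $V(\inter(C)) \neq \emptyset$, so $H_2 := H[V(\exter(C)) \cup V(C)]$ is a proper subgraph of $H$. The $4$-criticality of $H$, established just before the claim, supplies a proper $3$-coloring $\phi$ of $H_2$, and its restriction to $V(C)$ is a $3$-coloring of the $4$-face $C$ of the triangle-free plane graph $H_1$. Theorem~\ref{thm:5face} then extends this precoloring to a proper $3$-coloring of $H_1$; gluing with $\phi$ along $C$ yields the desired $3$-coloring of $H$.

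The main obstacle I anticipate is the first step: one must recognize that the triangle-free hypothesis on $H_1$ in fact rules out chords of $C$, precisely because chord-triangles use only vertices of $V(C)$ and thus belong to $H_1$. Once $C$ is identified as a face of $H_1$, the rest is a clean combination of $4$-criticality of $H$ and Theorem~\ref{thm:5face}, with no case analysis required.
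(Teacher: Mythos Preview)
Your proof is correct and follows essentially the same approach as the paper's: assume for contradiction that one side of $C$ (say the closed interior) is triangle-free and avoids $w$, obtain a $3$-coloring of the other side via $4$-criticality of $H$, and then extend across $C$ using Theorem~\ref{thm:5face}. Your write-up is in fact more careful than the paper's, since you explicitly justify that $C$ is a $4$-face of $H_1$ (by ruling out interior chords via the triangle-free hypothesis) and spell out that the coloring of $H_2$ comes from $4$-criticality rather than appealing vaguely to ``minimality''.
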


	\begin{proofclaim}
		Suppose the contrary and let $C$ be a separating $4$-cycle of $H$ 
		such that $H[V(\inter(C)) \cup V(C)]$ is a triangle-free planar graph that does not contain $w$. 
		By the minimality, there is a $3$-coloring $\phi$ of $H[V(\exter(C)) \cup V(C)]$. 
		By \Cref{thm:5face}, we can extend $\phi$ to the whole graph $H$, a contradiction.
	\end{proofclaim}

	Now, we are ready to finish the proof 
	by considering three cases regarding $4$-faces of $G$.

	\medskip \noindent
	{\bf Case 1:} \textit{$G$ has no $4$-faces.} \quad
	By the Handshaking Lemma, we have $2m_G \ge 5f_G - 2$ and so $2m_H + 6 \ge 5f_H - 2$.
	Then, $5f_H \le 2m_H + 8$ and by applying the Euler's Formula on $G$, we infer that 
	$$
		m_H \le \frac{5n_H - 2}{3}\,.
	$$ 
	Since $H$ is $4$-critical, by \Cref{thm:4-critical_ore}, we have that $m_H= \frac{5n_H - 2}{3}$ and that $H$ is a $4$-Ore graph.
	Moreover, since $H$ does not have separating triangles by \Cref{claim:no_separating_C3}, 
	there are at most four triangles in $H$ ($T$ and the faces $\alpha_1'$, $\alpha_2'$, and $\alpha_3'$). 	
	Thus, by \Cref{thm:3triangles}, $H$ has exactly four triangles 
	and  by \Cref{thm:pl44f-graphs}, $H$ is a $\Pl$-graph. 	
	Recall that three of the triangles are incident to the same vertex $w$. 
	The only $\Pl$-graph for which this is true is $K_4$~\cite[Theorem~4]{BorDvoKosLidYan14}. 
	However, to obtain $K_4$, all three neighbors of $u$ in $G$ must be of degree $2$,
	meaning that $G$ must be $K_4'$, a contradiction.

	\medskip \noindent
	{\bf Case 2:} \textit{$G$ has a $4$-face that is incident to $u$.} \quad
	As a result, after identifying $u$ and its neighbors, 
	$H$ has at most three triangles by \Cref{claim:no_separating_C3}. 
	Therefore, $H$ is $3$-colorable by \Cref{thm:3triangles}.

	\medskip \noindent
	{\bf Case 3:} \textit{$G$ has a $4$-face $\alpha=v_1v_2v_3v_4$ and $\alpha$ is not incident to $u$.} \quad 
	The edges $v_1v_3$ and $v_2v_4$ are not present in $G$, otherwise $G$ would have at least two triangles.
	Moreover, if $u$ is adjacent to two (opposite) vertices of $\alpha$, say $v_1=u_1$ and $v_3=u_3$, 
	then, by Case 2, neither $C_1=uv_1v_2v_3$ nor $C_2=uv_1v_4v_3$ is a $4$-face.
	Moreover, without loss of generality, we may assume that $u_2 \in V(\exter(C_1))$.
	However, by the minimality, there is a $3$-coloring of $G[V(\exter(C_1)) \cup V(C_1)]$,
	and it can easily be extended to the whole $G$ by Theorem~\ref{thm:onetriangle_5face}.
	Therefore, at most one of the vertices of $\alpha$ is adjacent to $u$.
	
	Let $G_i$ be the graph obtained from $G$ by identifying $v_i$ and $v_{i+2}$, where $i \in \{1,2\}$.	
	Suppose that the only triangle in $G_1$ is $T$. 
	Then, by the minimality, the graph $H_1$ obtained by identifying the vertices $v_1$ and $v_3$ in $H$ is $3$-colorable. 
	Clearly, its coloring can be extended to $H$ and thus also to $G$, a contradiction.
	
	Therefore, by symmetry, we may assume that in $G_1$ and $G_2$ the number of triangles increases.
	It follows by Lemma~\ref{lem:10Borodin} that there are vertices $x$, $y$, and $z$ 
	such that $v_1z,v_2z,xz,xv_4,yz,yv_3 \in E(G)$, where $T=v_1v_2z$.
	If one of $C_1=zv_1v_4x$ and $C_2=zv_2v_3y$ is a $4$-face, 
	it has the same properties as $\alpha$ and two of its vertices are incident with $T$. 
	But that is not possible due to planarity. 
	
	Thus, $C_1$ and $C_2$ are separating $4$-cycles of $G$. 
	Since, at most one of them can contain $u \neq z$ (by definition, $u$ is not incident with a triangle), 
	the other one remains a separating $4$-cycle of $H$, 
	which does not contain $T$ nor $w$, a contradiction to \Cref{claim:no_separating_C4}.
	This completes the proof.
\end{proof}

\section{Conclusion}
	\label{sec:conclude}

One motivation for the research presented in this paper was a conjecture 
on adynamic coloring of planar graphs with one triangle.
An {\em adynamic coloring} is a proper vertex coloring of a graph $G$ 
such that for at least one $2^+$-vertex all of its neighbors are colored with a same color.
Clearly, to admit such a coloring, $G$ must have at least one $2^+$vertex $v$ with an independent neighborhood,
i.e., $v$ is not incident to a triangle. 
This is also a sufficient condition.

In~\cite{SurLuzMad20}, it was proved that every triangle-free planar graph admits an adynamic $3$-coloring 
(note that this fact is also a corollary of Theorem~\ref{thm:4vert}).
On the other hand, there are planar graphs with two triangles that need $4$ colors (see, e.g., the graph in Figure~\ref{fig:2trian}).
Regarding planar graphs with one triangle, \v{S}urimov\'{a} et al.~\cite{SurLuzMad20} conjectured 
that they are $3$-colorable as soon as they contain a $2^+$-vertex with an independent neighborhood.
Using the results of this paper, we are able to answer the conjecture in affirmative.
\begin{theorem}
	\label{thm:adynamic_one_triangle}
	Every planar graph with at most one triangle and a $2^+$-vertex with an independent neighborhood 
	is adynamically $3$-colorable.
\end{theorem}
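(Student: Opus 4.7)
\emph{Proof plan.} Let $G$ be a planar graph with at most one triangle, and let $v$ be a $2^+$-vertex with independent neighborhood. The plan is to produce a proper 3-coloring of $G$ in which some (not necessarily $v$) $2^+$-vertex with independent neighborhood has all of its neighbors colored alike.

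First, if $\deg(v)=2$, the two vertices of $N(v)$ are non-adjacent, so Theorem~\ref{thm:triangle_plus_precolored_pair} gives a proper 3-coloring of $G$ in which they receive a common color; $v$ then serves as the adynamic witness. If $\deg(v)=3$ and $G$ is $K_4'$-free, Theorem~\ref{thm:3neighbors} applied to $v$ yields the desired 3-coloring directly. Hence we may focus on the exceptional situations in which $\deg(v)\ge 4$, or $\deg(v)=3$ but $G$ contains a $K_4'$ whose apex has degree $3$ in $G$.

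When $G$ contains such a $K_4'$ with apex $u^*$, the triangle of $K_4'$ must coincide with the unique triangle of $G$ (otherwise $G$ would contain two triangles). Consequently, the three subdivision vertices $a,b,c$ of $K_4'$ are pairwise non-adjacent in $G$. If any of $a,b,c$ has degree $2$ in $G$, then it is itself a 2-vertex with independent neighborhood and the first case applies. Otherwise each of $a,b,c$ has degree at least $3$, and we argue by induction on $|V(G)|$: a carefully chosen reduction (for instance removing $u^*$ in the $K_4'$-case, or identifying two non-adjacent neighbors of $v$ when $\deg(v)\ge 4$) yields a smaller planar graph with at most one triangle that still admits a $2^+$-vertex with independent neighborhood; the adynamic 3-coloring obtained inductively is then lifted to $G$ using Theorems~\ref{thm:triangle_plus_precolored_pair} and~\ref{thm:3neighbors}.

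The main obstacle is ensuring that the inductive reduction preserves all of the hypotheses---planarity, at most one triangle, and the existence of a $2^+$-vertex with independent neighborhood---and that the anchor chosen in the reduced graph lifts to a valid adynamic witness in $G$. This will require a careful local analysis of the structure of $G$ around $v$ and around the $K_4'$-configuration, in the spirit of the case analyses employed in the proofs of Theorems~\ref{thm:triangle_plus_precolored_pair} and~\ref{thm:3neighbors}.
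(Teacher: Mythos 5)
Your first two cases are fine (and your use of Theorem~\ref{thm:triangle_plus_precolored_pair} for a $2$-vertex is a legitimate alternative to the paper's vertex-splitting trick), but the remainder is a plan, not a proof: the entire difficulty of the theorem lives in the case you defer to ``a carefully chosen reduction'' and ``a careful local analysis.'' Neither of the reductions you float survives scrutiny. Identifying two non-adjacent neighbors of a vertex $v$ of degree at least $4$ can create new triangles, destroying the one-triangle hypothesis, and nothing guarantees the smaller graph retains a $2^+$-vertex with independent neighborhood. Deleting the apex $u^*$ of the $K_4'$ is worse: an adynamic witness in $G-u^*$ may have $u^*$ as a neighbor in $G$, and you have no control over the color $u^*$ receives when you put it back, so the witness need not lift. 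You also cannot simply ``apply the first case'' when some subdivision vertex of the $K_4'$ has degree $2$ and otherwise recurse --- you need an argument that terminates, and you have not exhibited one.

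The paper resolves this case by a different mechanism that your proposal does not approach. After disposing of $2$-vertices, one has $\delta(G)\ge 3$; a Handshaking/Euler count then produces many $3$-vertices not incident with the unique triangle $T$, each of which automatically has an independent neighborhood, so Theorem~\ref{thm:3neighbors} forces $G$ to contain a $K_4'$ subgraph $D$. The key idea is then to exploit the \emph{face structure} of $D$: since $\delta(G)\ge 3$, the degree-$2$ vertices of $D$ have extra edges, so some $5$-face $\alpha$ of $D$ has a nonempty interior in $G$. The subgraph induced by $V(\alpha)$ and the interior of $\alpha$ is triangle-free (the only triangle of $G$ is the triangle of $D$, which lies outside $\alpha$), hence admits an adynamic $3$-coloring by the known triangle-free result; this coloring determines the colors on $\alpha$ and forces the color of the remaining triangle vertex, and the interiors of the other two $5$-faces of $D$ and of $T$ are then filled in by Theorem~\ref{thm:5face}. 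Without this decomposition --- or some concrete, verified substitute --- your argument does not close.
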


\begin{proof}
	We again proceed by contradiction.
	Let $G$ be a minimum counterexample in terms of the number of vertices with some fixed embedding.
	By Theorem~\ref{thm:4vert}, $G$ has exactly one triangle $T$.
	Suppose first that there is a $2$-vertex $v$ in $G$ and let $N(v) = \{v_1,v_2\}$.
	The graph $G'$ obtained by splitting $v$ into two adjacent vertices both connected to $v_1$ and $v_2$ 
	is planar with at most three triangles and thus $3$-colorable by Theorem~\ref{thm:3triangles}.
	Its coloring induces a coloring of $G$ in which $v_1$ and $v_2$ receive the same color, a contradiction.
	
	Therefore, $\delta(G) \ge 3$.
	Moreover, by the Handshaking Lemma and the Euler's Formula, there are at least nine $3$-vertices in $G$,
	and so at least six $3$-vertices are not incident with $T$.	
	Hence, by Theorem~\ref{thm:3neighbors}, $G$ contains a subgraph $D$ isomorphic to $K_4'$.
	Since $\delta(G) \ge 3$, there is a $5$-face $\alpha$ of $D$ that is not a face in $G$.
	
	The graph induced by the interior of $\alpha$ in $G$ and $V(\alpha)$ is a triangle-free plane graph,
	which we can $3$-color adynamically.
	This coloring gives us a coloring of the vertices of $\alpha$ and fixes also the color of the vertex of $T$
	that is not incident with $\alpha$.
	It remains to color (eventual) interiors of the other two $5$-faces of $D$ in $G$ 
	and the interior of $T$. All can be colored by Theorem~\ref{thm:5face}.
	This completes the proof.
\end{proof}

\medskip
Regarding other results in this paper, there are a number of possibilities for further work.
For example, similar results for planar graphs with two triangles and three triangles would be interesing.
\begin{problem}
	Characterize planar graphs with two (resp., three) triangles, 
	in which precoloring of any two non-adjacent vertices extends to a $3$-coloring of the graph.
\end{problem}

Also, one could investigate in more details precoloring extensions from larger independent sets.
\begin{problem}
	\label{prob:three}
	Characterize planar graphs with one triangle, 
	in which precoloring of any three non-adjacent vertices extends to a $3$-coloring of the graph.	
\end{problem}
Additionally, Problem~\ref{prob:three} could be extended to determining the properties of triples of non-adjacent vertices
whose precoloring does not extend to the whole graph; particularly, which colorings of them. 

We showed that a precoloring of a $5$-face in a planar graph $G$ with one triangle cannot always be
extended to a $3$-coloring of $G$. 
So it is natural to ask for a characterization similar to characterizations for faces of lengths $6$ to $9$ 
in triangle-free planar graphs.
\begin{problem}
	Characterize planar graphs with one triangle, 
	in which precoloring of a $5$-face (resp., $k$-face for any $k \ge 6$) extends to a $3$-coloring of the graph.
\end{problem}


\paragraph{Acknowledgment.}
The first author was partially supported by the grant HOSIGRA funded by the French National Research Agency
(ANR, Agence Nationale de la Recherche) under the contract number ANR--17--CE40--0022.
The second and the third author were partially supported by the Slovenian Research Agency program P1--0383 and the project J1--1692.
The third author also acknowledges support by the Young Researchers Grant of the Slovenian Research Agency.

\bibliographystyle{plain}
{
	\bibliography{References}
}

\end{document}